\documentclass[reqno]{amsart}
\usepackage[left=1in, right=1in, top=1in]{geometry}

\usepackage{etoolbox}

\makeatletter
\let\old@tocline\@tocline
\let\section@tocline\@tocline
\newcommand{\section@dotsep}{4.5}
\newcommand{\subsection@dotsep}{4.5}
\patchcmd{\@tocline}
  {\hfil}
  {\nobreak
     \leaders\hbox{$\m@th
        \mkern \section@dotsep mu\hbox{.}\mkern \section@dotsep mu$}\hfill
     \nobreak}{}{}
\let\section@tocline\@tocline
\let\@tocline\old@tocline

\patchcmd{\@tocline}
  {\hfil}
  {\nobreak
     \leaders\hbox{$\m@th
        \mkern \subsection@dotsep mu\hbox{.}\mkern \subsection@dotsep mu$}\hfill
     \nobreak}{}{}
\let\subsection@tocline\@tocline
\let\@tocline\old@tocline

\let\old@l@section\l@section
\let\old@l@subsection\l@subsection

\def\@tocwriteb#1#2#3{%
  \begingroup
    \@xp\def\csname #2@tocline\endcsname##1##2##3##4##5##6{%
      \ifnum##1>\c@tocdepth
      \else \sbox\z@{##5\let\indentlabel\@tochangmeasure##6}\fi}%
    \csname l@#2\endcsname{#1{\csname#2name\endcsname}{\@secnumber}{}}%
  \endgroup
  \addcontentsline{toc}{#2}%
    {\protect#1{\csname#2name\endcsname}{\@secnumber}{#3}}}%

\newlength{\@tocsectionindent}
\newlength{\@tocsubsectionindent}
\newlength{\@tocsubsubsectionindent}
\newlength{\@tocsectionnumwidth}
\newlength{\@tocsubsectionnumwidth}
\newlength{\@tocsubsubsectionnumwidth}
\newcommand{\settocsectionnumwidth}[1]{\setlength{\@tocsectionnumwidth}{#1}}
\newcommand{\settocsubsectionnumwidth}[1]{\setlength{\@tocsubsectionnumwidth}{#1}}
\newcommand{\settocsubsubsectionnumwidth}[1]{\setlength{\@tocsubsubsectionnumwidth}{#1}}
\newcommand{\settocsectionindent}[1]{\setlength{\@tocsectionindent}{#1}}
\newcommand{\settocsubsectionindent}[1]{\setlength{\@tocsubsectionindent}{#1}}
\newcommand{\settocsubsubsectionindent}[1]{\setlength{\@tocsubsubsectionindent}{#1}}

\renewcommand{\l@section}{\section@tocline{1}{\@tocsectionvskip}{\@tocsectionindent}{}{\@tocsectionformat}}%
\renewcommand{\l@subsection}{\subsection@tocline{1}{\@tocsubsectionvskip}{\@tocsubsectionindent}{}{\@tocsubsectionformat}}%
\renewcommand{\l@subsubsection}{\subsubsection@tocline{1}{\@tocsubsubsectionvskip}{\@tocsubsubsectionindent}{}{\@tocsubsubsectionformat}}%
\newcommand{\@tocsectionformat}{}
\newcommand{\@tocsubsectionformat}{}
\newcommand{\@tocsubsubsectionformat}{}
\expandafter\def\csname toc@1format\endcsname{\@tocsectionformat}
\expandafter\def\csname toc@2format\endcsname{\@tocsubsectionformat}
\expandafter\def\csname toc@3format\endcsname{\@tocsubsubsectionformat}
\newcommand{\settocsectionformat}[1]{\renewcommand{\@tocsectionformat}{#1}}
\newcommand{\settocsubsectionformat}[1]{\renewcommand{\@tocsubsectionformat}{#1}}
\newcommand{\settocsubsubsectionformat}[1]{\renewcommand{\@tocsubsubsectionformat}{#1}}
\newlength{\@tocsectionvskip}
\newcommand{\settocsectionvskip}[1]{\setlength{\@tocsectionvskip}{#1}}
\newlength{\@tocsubsectionvskip}
\newcommand{\settocsubsectionvskip}[1]{\setlength{\@tocsubsectionvskip}{#1}}
\newlength{\@tocsubsubsectionvskip}
\newcommand{\settocsubsubsectionvskip}[1]{\setlength{\@tocsubsubsectionvskip}{#1}}

\patchcmd{\tocsection}{\indentlabel}{\makebox[\@tocsectionnumwidth][l]}{}{}
\patchcmd{\tocsubsection}{\indentlabel}{\makebox[\@tocsubsectionnumwidth][l]}{}{}
\patchcmd{\tocsubsubsection}{\indentlabel}{\makebox[\@tocsubsubsectionnumwidth][l]}{}{}

\newcommand{\@sectypepnumformat}{}
\renewcommand{\contentsline}[1]{%
  \expandafter\let\expandafter\@sectypepnumformat\csname @toc#1pnumformat\endcsname%
  \csname l@#1\endcsname}
\newcommand{\@tocsectionpnumformat}{}
\newcommand{\@tocsubsectionpnumformat}{}
\newcommand{\@tocsubsubsectionpnumformat}{}
\newcommand{\setsectionpnumformat}[1]{\renewcommand{\@tocsectionpnumformat}{#1}}
\newcommand{\setsubsectionpnumformat}[1]{\renewcommand{\@tocsubsectionpnumformat}{#1}}
\newcommand{\setsubsubsectionpnumformat}[1]{\renewcommand{\@tocsubsubsectionpnumformat}{#1}}
\renewcommand{\@tocpagenum}[1]{%
  \hfill {\mdseries\@sectypepnumformat #1}}

\let\oldappendix\appendix
\renewcommand{\appendix}{%
  \leavevmode\oldappendix%
  \addtocontents{toc}{%
    \protect\settowidth{\protect\@tocsectionnumwidth}{\protect\@tocsectionformat\sectionname\space}%
    \protect\addtolength{\protect\@tocsectionnumwidth}{2em}}%
}
\makeatother

\makeatletter
\settocsectionnumwidth{2em}
\settocsubsectionnumwidth{2.5em}
\settocsubsubsectionnumwidth{3em}
\settocsectionindent{1pc}%
\settocsubsectionindent{\dimexpr\@tocsectionindent+\@tocsectionnumwidth}%
\settocsubsubsectionindent{\dimexpr\@tocsubsectionindent+\@tocsubsectionnumwidth}%
\makeatother

\settocsectionvskip{0pt}
\settocsubsectionvskip{0pt}
\settocsubsubsectionvskip{0pt}

\usepackage{mathrsfs}
\usepackage{mathrsfs}
\usepackage{amsfonts}
\usepackage{comment}
\usepackage{cases}
\usepackage{latexsym}
\usepackage{amsmath}
\usepackage[all]{xy}
\usepackage{stmaryrd}
\usepackage{amsfonts}
\usepackage{amsmath,amssymb,amscd,bbm,amsthm,mathrsfs,dsfont}
\usepackage[notref, notcite]{showkeys}

\usepackage{tikz}

\usepackage{pgflibraryarrows}
\usepackage{pgflibrarysnakes}

\usepackage[numbers,sort&compress]{natbib}
\usepackage[pagebackref]{hyperref}
\usepackage{hypernat}

\usepackage{color, hyperref}
\definecolor{blue}{rgb}{0.9,0.0,0.9}
\hypersetup{colorlinks, breaklinks,
            linkcolor=red,urlcolor=blue,
            anchorcolor=blue,citecolor=blue}

\usepackage{fancyhdr}
\usepackage{amsxtra,ifthen}
\usepackage{verbatim}

\usepackage{fancyhdr}
\usepackage{amsxtra,ifthen}
\usepackage{verbatim}

\numberwithin{equation}{section}

\pagestyle{plain}

\theoremstyle{plain}
\newtheorem{theorem}{Theorem}[section]
\newtheorem{lemma}[theorem]{Lemma}

\newtheorem{corollary}[theorem]{Corollary}

\theoremstyle{definition}

\allowdisplaybreaks[4]

\begin{document}

\title[Lie Biderivations on Triangular Algebras]
{Lie Biderivations on Triangular Algebras}

\author{Xinfeng Liang, Dandan Ren and Feng Wei}

\address{Liang: School of Mathematics and Statistics, AnHui university of science \& technology, 232001, Huainan, P.R. China}

\email{xfliang@aust.edu.cn}

\address{Ren: School of Mathematics and Statistics, AnHui university of science \& technology, 232001, Huainan, P.R. China}

\email{dandanren0225@163.com}

\address{Wei: School of Mathematics and Statistics, Beijing Institute of Technology, 100081, Beijing, P. R. China}

\email{daoshuo@hotmail.com\\
daoshuowei@gmail.com}

\begin{abstract}
Let $\mathcal{T}$ be a triangular algebra over a commutative ring $\mathcal{R}$ and
$\varphi: \mathcal{T} \times \mathcal{T}\longrightarrow \mathcal{T}$ be an arbitrary Lie biderivation of $\mathcal{T}$.
We will address the question of describing the form of $\varphi$ in the current work.
It is shown that under certain mild assumptions, $\varphi$
is the sum of an inner biderivation and an extremal biderivation and a some central bilinear mapping.
Our results is immediately applied to block upper triangular algebras and Hilbert space nest algebras .
\end{abstract}

\date{\today}

\subjclass[2000]{16W25, 15A78, 47L35}

\keywords{Lie biderivation, derivation, triangular algebra}

\thanks{The work of the first author is partially supported by
Key Projects of Natural Science Research in Anhui Province (Grant No. KJ2019A0107,KJ2018A0082);
the second author is partially supported by the National Natural Science Foundation
of China (11801008), Key Program of Scientific Research Fund
for Young Teachers of AUST (Grant No.QN2017209), 
talent Introduction Project of Anhui University of Science \& Technology(Grant No. 11690).}

\maketitle


\section{Introduction}\label{xxsec1}

Let $\mathcal{R}$ be a commutative ring with identity and $\mathcal{A}$ be an associative $\mathcal{R}$-algebra with center $\mathcal{Z(A)}$.
An $\mathcal{R}$-linear mapping $d: \mathcal{A}\longrightarrow \mathcal{A}$ is called a \textit{derivation}
if $d(xy)=d(x)y+xd(y)$ for all $x,y\in \mathcal{A}$,  and is called a \textit{Lie derivation} if
$$
d([x,y])=[d(x),y]+[x,d(x)]
$$
for all $x\in \mathcal{A}$. An $\mathcal{R}$-linear mapping $d: \mathcal{A}\longrightarrow \mathcal{A}$ of the form $a\mapsto am-ma $ for some $m\in \mathcal{A}$,
is said to be an \textit{inner derivation}.
A $\mathcal{R}$-bilinear mapping $\varphi: \mathcal{A}\times \mathcal{A}\rightarrow \mathcal{A}$
is a \textit{biderivation} if it is a derivation
with respect to both components, that is
$$
\varphi(xz, y)=\varphi(x,y)z+x\varphi(z,y)~\text{and}~\varphi(x,yz)=\varphi(x,y)z+y\varphi(x,z)
$$
for all $x,y\in \mathcal{A}$. If the algebra $\mathcal{A}$ is noncommutative,
then the mapping $\varphi(x, y)=\lambda[x, y]$ for all $x,y\in \mathcal{A}$ and
some $\lambda\in \mathcal{Z(A)}$ is called an \textit{inner biderivation}.
An $\mathcal{R}$-bilibear mapping $\varphi: \mathcal{A}\times \mathcal{A}\rightarrow \mathcal{A}$
is said to be an \textit{extremal biderivation}
if it is of the form $\varphi(x,y)=[x,[y, a]]$ for all $x, y\in \mathcal{A}$ and some $a\in \mathcal{A}, a\notin \mathcal{Z(A)}$.
An $\mathcal{R}$-bilinear mapping $\varphi: \mathcal{A}\times \mathcal{A}\rightarrow \mathcal{A}$
is a \textit{Lie biderivation} if it is a Lie derivation
with respect to both components, implying that
$$
\begin{aligned}
\varphi([x,z],y)&=[\varphi(x,y),z]+[x,\varphi(z,y)]~\text{and}~\\
\varphi(x,[y,z])&=[\varphi(x,y),z]+[y,\varphi(x,z)]
\end{aligned}
$$
for all $x,y\in \mathcal{A}$.

Suppose that
$A$ and $B$ are two unital algebras over $\mathcal{R}$ and $M$ is a
nonzero faithful bimodule as a left $A$-module and also right $B$-module. Then one can define
$$
\left[
\begin{array}
[c]{cc}%
A & M\\
0 & B\\
\end{array}
\right]=\left\{ \hspace{2pt} \left[
\begin{array}
[c]{cc}%
a & m\\
0 & b\\
\end{array}
\right] \hspace{2pt} \vline \hspace{2pt} a\in A, b\in B, m\in M
\hspace{2pt} \right\}
$$
to be an associative algebra under matrix-like addition and
matrix-like multiplication. An algebra $\mathcal{T}$ is called a
\textit{triangular algebra} if there exist algebras $A, B$ and
nonzero faithful $(A, B)$-bimodule $M$ such that $\mathcal{T}$ is
(algebraically) isomorphic to
$$
\left[
\begin{array}
[c]{cc}%
A & M\\
O & B\\
\end{array}
\right]
$$
under matrix-like addition and matrix-like multiplication. Usually,
we denote a triangular algebra by $\mathcal{T}=\left[\smallmatrix A & M\\
O & B \endsmallmatrix \right]$. This kind of algebra was first
introduced by Chase in \cite{Chase}. He applied triangular algebras
to show us the asymmetric behavior of semi-hereditary rings and
constructed an classical example of a left semi-hereditary ring
which is not right semi-hereditary. Harada referred to the
triangular algebras as generalized triangular matrix rings in the
literature \cite{Harada} which he used triangular algebras to study
the structure of hereditary semi-primary rings. The definition of
triangular algebra is somewhat formal and hence they are said to be
formal triangular matrix algebras in the situation of noncommutative
algebras \cite{HaghanyVara}.

The concept of biderivation originates from difference or functional equations
and their inequalities instead of associative algebras.
It was Maksa \cite{Maksa1980} who initially introduced the concept of biderivations.
Vukman \cite{Vukman1989, Vukman1990} investigated biderivations
on prime and semiprime rings. Bre\v sar et al. \cite{BresarMartindaleMiers1993} have shown
that each biderivation $\varphi$ on a noncommutative prime ring $\mathcal{R}$
is of the form $\varphi(x, y)=\lambda [x, y]$, for some element $\lambda$ in the extended centroid of $\mathcal{R}$.
It has turned out that this result can be applied to the problem of describing the form of commuting mappings.
We encourage the reader to refer to the survey paper \cite{Bresar2004} where applications of biderivations
to some other areas are provided.
The study of commuting mappings and biderivations has its deep roots in the structure theory of associative algebras,
where it has proved to be influential and far-reaching, see \cite{Bresar2004} and references therein.
 They have been becoming an active research topic
in the theory of additive mappings of associative algebras
since Bre\v sar's elegant work \cite{BresarMartindaleMiers1993, Bresar1995}.
On the other hand, an interest in studying these mappings on Lie algebras has been increasing more recently, see
\cite{BresarZhao2018, LiuGuoZhao2018, Tang2018}.

The objective of this paper is to investigate Lie biderivations on triangular algebras.
Many authors have made important and essantial contributions to the related topics,
see \cite{AbdiogluLee2017, Ahmed2016, Benkovic2009,
BenkovicEremita2004, Bresar1995, BresarZhao2018, CheraghpourGhosseiri2019, DuWang2013,
Eremita2017, Fosner2015, Ghosseiri2013, Ghosseiri2017, LiangRenWei2019, LiuGuoZhao2018,
MGRSO2017, Tang2018, Wang2016, ZhangFengLiWu2006, ZhaoWangYao2009}. Cheung in \cite{Cheung2000} initiated the study of linear
mappings of abstract triangular algebras and obtained a number of elegant results. He gave detailed descriptions concerning
automorphisms, derivations, commuting mappings and Lie derivations of triangular algebras in \cite{Cheung2000, Cheung2001}.
Benkovi\v c \cite{Benkovic2005} considered Jordan derivations of triangular matrices over a commutative ring with
identity and proved that any Jordan derivation from the algebra of all upper triangular matrices into its arbitrary bimodule is the sum
of a derivation and an antiderivation.  Zhang and Yu \cite{ZhangYu2006} observed that each Jordan derivation on a
2-torsion free triangular algebra is a derivation. Generalized Biderivations on nest algebras were also studied by Zhang et al. \cite{ZhangFengLiWu2006}.
They provided a sufficient and necessary condition which enable each generalized biderivation
on a complex separable Hilbert space nest algebra to be inner. Zhao et al. \cite{ZhaoWangYao2009} investigated
biderivations on upper triangular matrix algebras over a commutative ring $\mathcal{R}$. They proved that
each biderivation on the algebra $\mathcal{T}_n(\mathcal{R})$ of all upper triangular $n\times n$ matrices over
$\mathcal{R}$ is the sum of an inner biderivation and an extremal biderivation.

Benkovi\v c  \cite{Benkovic2009} paid special attention to biderivations on a certain class of triangular algebras. He obtained
that a bilinear biderivation $\varphi$ of a triangular algebra $\mathcal{T}$ satisfying certain conditions (see the conditions (1)--(4) in Theorem \ref{xxsec3.2}) is of the form
$\varphi(x, y)=\lambda[x, y]+[x, [y, r]]$ for some element $\lambda\in \mathcal{Z(T)}$ and some element $r\in \mathcal{T}$.
On the other hand, Ghosseiri \cite{Ghosseiri2013} considered biderivations of an arbitrary triangular ring $\mathcal{T}$
(not assuming $M$ is a faithful $(A, B)$-bimodule). He proved that each biderivation $\varphi: \mathcal{T}\times \mathcal{T}\longrightarrow \mathcal{T}$
can be decomposed into $\varphi= \tau+\psi+\delta$, where $\tau$ is a biderivation satisfying certain conditions,
$\psi$ is an extremal biderivation, $\delta$ is a special kind of biderivation.
Basing on previous Benkovi\v c's and Ghosseiri's works, our\cite{LiangRenWei2019} use the same as condition (see the conditions (1)--(4) in Theorem \ref{xxsec3.4}) to obtain the form of Jordan biderivation of a triangular algebras $\mathcal{T}$;  Eremita \cite{Eremita2017} used the notion of
the maximal left ring of quotients to describe the form of biderivations of a triangular ring. His distinguished approach
permit him to achieve double purpose:  generalizing Benkovi\v c's result on biderivations \cite[Theorem 4.11]{Benkovic2009}  and
refining Ghosseiri's result on biderivations \cite[Theorem 2.4]{Ghosseiri2013}.
More recently, Ghosseiri and his collaborators \cite{CheraghpourGhosseiri2019, Ghosseiri2017} further
characterized the structure of biderivations and superderivations on trivial extensions,
which are natural generalizations of the corresponding results on triangular algebras.
Wang et al\cite{WAngYuChen2011} investigated biderivations on Parabolic Subalgebras of Simple Lie Algebras $\mathfrak{g}$ of rank l over an algebraically closed field of characteristic zero. Let $\mathfrak{p}$ an arbitrary parabolic subalgebra of $\mathfrak{g}$, they proved that a bilinear map
$\varphi:\mathfrak{p}\times \mathfrak{p}\rightarrow \mathfrak{p}$ is a biderivation if and only if it is a sum of an inner and an extremal biderivation. Wang and Yu\cite{WangYu2013} observed that each biderivation of
Schr\"{0}dinger-Virasoro Lie algebra $\Omega$ over the complex field $\mathfrak{C}$ is inner.
As an application of biderivations, they show that every linear commuting map $\varphi$ on $\Omega$ has the form $\varphi(x)=\lambda x+\mathfrak{f}(x)M_0$, where $\lambda\in \mathfrak{C}$, $M_0$ is a basis of the one-dimensional center of $\Omega$, and $\mathfrak{f}$ is a linear function from $\Omega$ to $\mathfrak{C}$.
Cheng et al. \cite{ChengWangSunZhang2017} prove that each skew-symmetric biderivation of the Lie algebra $\mathfrak{gca}$ over the complex field $\mathfrak{C}$ is inner. As an application of biderivations, we will show that every linear commuting map   $\phi$ on the Lie algebra  $\mathfrak{gca}$  has the form $\phi(x)=\lambda x$ , where $\lambda\in \mathfrak{C}$.
Liu et al.\cite{LiuGuoZhao2018} determine the biderivations of the block Lie algebras $\mathcal{B}(q)$ for all $q\in \mathfrak{C}$. More precisely, they prove that the space of biderivations of $\mathcal{B}(q)$ is spanned by inner biderivations and one outer biderivation. Applying this result,  they also research all commuting maps on $\mathcal{B}(q)$.

This paper is devoted to the treatment of Lie biderivations of
triangular algebras, and its framework is as follows. After Introduction, the second
section states some fundamental facts about triangular algebras and demonstrates three classical examples.
The kernel question of the current work is to describe the decomposition forms of Lie biderivations on triangular algebras, which
takes places in the mainbody--Section 3.

\section{Preliminaries}\label{xxsec2}

\vspace{2mm}

Let $\mathcal{R}$ be a commutative ring with identity. Let $A$ and $B$
be unital algebras over $\mathcal{R}$. Recall that an $(A, B)$-bimodule $M$ is
\textit{faithful} if for any $a\in
A$ and $b\in B$, $aM=0$ (resp. $Mb=0$) implies that $a=0$ (resp. $b=0$).

Let $A, B$ be unital associative algebras over $\mathcal{R}$ and $M$
be a unital $(A,B)$-bimodule, which is faithful as a left $A$-module
and also as a right $B$-module. We denote the {\em triangular algebra}
consisting of $A, B$ and $M$ by
$$
\mathcal{T}=\left[
\begin{array}
[c]{cc}%
A & M\\
O & B\\
\end{array}
\right] .
$$
Then $\mathcal{T}$ is an associative and noncommutative
$\mathcal{R}$-algebra. The center $\mathcal{Z(T)}$ of $\mathcal{T}$ is (see \cite[Proposition 3]{Cheung2003})
$$
\mathcal{Z(T)}=\left\{ \left[
\begin{array}
[c]{cc}%
a & 0\\
0 & b
\end{array}
\right] \vline \hspace{3pt} am=mb,\ \forall\ m\in M \right\}.
$$
Let us define two natural $\mathcal{R}$-linear projections
$\pi_A:\mathcal{T}\rightarrow A$ and $\pi_B:\mathcal{T}\rightarrow
B$ by
$$
\pi_A: \left[
\begin{array}
[c]{cc}%
a & m\\
0 & b\\
\end{array}
\right] \longmapsto a \quad \text{and} \quad \pi_B: \left[
\begin{array}
[c]{cc}%
a & m\\
0 & b\\
\end{array}
\right] \longmapsto b.
$$
It is easy to see that $\pi_A \left(\mathcal{Z(T)}\right)$ is a
subalgebra of ${\mathcal Z}(A)$ and that $\pi_B(\mathcal{Z(T)})$ is
a subalgebra of ${\mathcal Z}(B)$. Furthermore, there exists a
unique algebraic isomorphism $\tau\colon
\pi_A(\mathcal{Z(T)})\longrightarrow \pi_B(\mathcal{Z(T)})$ such
that $am=m\tau(a)$ for all $a\in \pi_A(\mathcal{Z(T)})$ and for all
$m\in M$.

Let $1$ (resp. $1^\prime$) be the identity of the algebra $A$ (resp.
$B$), and let $I$ be the identity of the triangular algebra
$\mathcal{T}$. We will use the following notations:
$$
e=\left[
\begin{array}
[c]{cc}%
1 & 0\\
0 & 0\\
\end{array}
\right], \hspace{8pt} f=I-e=\left[
\begin{array}
[c]{cc}%
0 & 0\\
0 & 1^\prime\\
\end{array}
\right]
$$
and
$$
\mathcal{T}_{11}=e{\mathcal T}e, \hspace{6pt}
\mathcal{T}_{12}=e{\mathcal T}f, \hspace{6pt}
\mathcal{T}_{22}=f{\mathcal T}f.
$$
Thus the triangular algebra $\mathcal{T}$ can be written as
$$
\mathcal{T}=e{\mathcal T}e+e{\mathcal T}f+f{\mathcal T}f
=\mathcal{T}_{11}+\mathcal{T}_{12}+\mathcal{T}_{22}.
$$
Here, $\mathcal{T}_{11}$ and $\mathcal{T}_{22}$ are subalgebras of
$\mathcal{T}$ which are isomorphic to $A$ and $B$, respectively.
$\mathcal{T}_{12}$ is a $(\mathcal{T}_{11},
\mathcal{T}_{22})$-bimodule which is isomorphic to the $(A,
B)$-bimodule $M$. It should be remarked that $\pi_A(\mathcal{Z(T)})$
and $\pi_B(\mathcal{Z(T)})$ are isomorphic to $e\mathcal{Z(T)}e$ and
$f\mathcal{Z(T)}f$, respectively. Then there is an algebra
isomorphism $\tau\colon e\mathcal{Z(T)}e\longrightarrow
f\mathcal{Z(T)}f$ such that $am=m\tau(a)$ for all $m\in
e\mathcal{T}f$.

\vspace{3mm}

Let us see several classical triangular algebras which will be frequently invoked
in the sequel discussion.

\subsection{Upper triangular matrix algebras}
\label{xxsec2.1}

Let $\mathcal{R}$ be a commutative ring with identity. We denote the
set of all $p\times q$ matrices over $\mathcal{R}$ by $M_{p\times
q}(\mathcal{R})$ and denote the set of all $n\times n$ upper
triangular matrices over $\mathcal{R}$ by $T_n(\mathcal{R})$. For
$n\geq 2$ and each $1\leq k \leq n-1$, the \textit{upper triangular
matrix algebra} $T_n(\mathcal{R})$ can be written as
$$
T_n(\mathcal{R})=\left[
\begin{array}
[c]{cc}%
T_k(\mathcal{R}) & M_{k\times (n-k)}(\mathcal{R})\\
O & T_{n-k}(\mathcal{R})
\end{array}
\right] .
$$

\subsection{Block upper triangular matrix algebras}
\label{xxsec2.2}

Let $\mathcal{R}$ be a commutative ring with identity. For each
positive integer $n$ and each positive integer $m$ with $m\leq n$,
we denote by $\bar{d}=(d_1, \cdots, d_i, \cdots, d_m)\in
\mathbb{N}^m$ an ordered $m$-vector of positive integers such that
$n=d_1+\cdots +d_i+\cdots+d_m$. The \textit{block upper triangular
matrix algebra} $B^{\bar{d}}_n(\mathcal{R})$ is a subalgebra of
$M_n(\mathcal{R})$ of the form
$$
B^{\bar{d}}_n(\mathcal{R})=\left[
\begin{array}
[c]{ccccc}%
M_{d_1}(\mathcal{R})  & \cdots & M_{d_1\times d_i}(\mathcal{R}) &
\cdots & M_{d_1\times
d_m}(\mathcal{R})\\
& \ddots & \vdots &  & \vdots  \\
 & & M_{d_i}(\mathcal{R}) & \cdots & M_{d_i\times d_m}(\mathcal{R}) \\
 & O  &  & \ddots & \vdots \\
 &  &  & & M_{d_m}(\mathcal{R})  \\
\end{array}
\right] =
$$
$$
\left[\smallmatrix \boxed{\smallmatrix
r_{1,1} & \cdots & r_{1,d_1}\\
\vdots & \ddots & \vdots\\
r_{d_1,1}& \cdots & r_{d_1,d_1}\endsmallmatrix} & \cdots &
\boxed{\smallmatrix
r_{1, x+1} & \cdots & r_{1, x+d_i}\\
\vdots & \ddots & \vdots\\
r_{d_1,x+1}& \cdots & r_{d_1, x+d_i}
\endsmallmatrix} & \cdots &
\boxed{\smallmatrix
r_{1, y+1} & \cdots & r_{1, y+d_m}\\
\vdots & \ddots & \vdots\\
r_{d_1, y+1}& \cdots & r_{d_1, y+d_m}
\endsmallmatrix}\\
 & \ddots & \vdots \hspace{48pt}\vdots & & \vdots \hspace{48pt}\vdots \\
 &  &
 \boxed{\smallmatrix
r_{x+1,x+1} & \cdots & r_{x+1, x+d_i}\\
\vdots & \ddots & \vdots\\
r_{x+d_i, x+1}& \cdots & r_{x+d_i,x+d_i}
\endsmallmatrix} & \cdots &
\boxed{\smallmatrix
r_{x+1,y+1} & \cdots & r_{x+1,y+d_m}\\
\vdots & \ddots & \vdots\\
r_{x+d_i,y+1}& \cdots & r_{x+d_i, y+d_m}
\endsmallmatrix} \\
 &  &  & \ddots & \vdots \hspace{48pt}\vdots\\
 & O &  & &
 \boxed{\smallmatrix
r_{y+1, y+1} & \cdots & r_{y+1,y+d_m}\\
\vdots & \ddots & \vdots\\
r_{y+d_m,y+1}& \cdots & r_{y+d_m,y+d_m}
\endsmallmatrix}
\endsmallmatrix
\right] .
$$
Note that the full matrix algebra $M_n(\mathcal{R})$ of all $n\times
n$ matrices over $\mathcal{R}$ and the upper triangular matrix
algebra $T_n(\mathcal{R})$ of all $n\times n$ upper triangular
matrices over $\mathcal{R}$ are two special cases of block upper
triangular matrix algebras. If $n\geq 2$ and
$B^{\bar{d}}_n(\mathcal{R})\neq M_n(\mathcal{R})$, then
$B^{\bar{d}}_n(\mathcal{R})$ is a triangular algebra and can be
represented as
$$
B^{\bar{d}}_n(\mathcal{R})=\left[
\begin{array}
[c]{cc}%
B^{\bar{d}_1}_j(\mathcal{R}) & M_{j\times (n-j)}(\mathcal{R})\\
O_{(n-j)\times j} & B^{\bar{d}_2}_{n-j}(\mathcal{R})\\
\end{array}
\right],
$$
where $1\leq j < m$ and $\bar{d}_1\in \mathbb{N}^j, \bar{d}_2\in
\mathbb{N}^{m-j}$.

\subsection{Nest algebras}
\label{xxsec2.3}

Let $\mathbf{H}$ be a complex Hilbert space and
$\mathcal{B}(\mathbf{H})$ be the algebra of all bounded linear
operators on $\mathbf{H}$. Let $I$ be a index set. A \textit{nest}
is a set $\mathcal{N}$ of closed subspaces of $\mathbf{H}$
satisfying the following conditions:
\begin{enumerate}
\item[(1)] $0, \mathbf{H}\in \mathcal{N}$;
\item[(2)] If $N_1, N_2\in \mathcal{N}$, then
either $N_1\subseteq N_2$ or $N_2\subseteq N_1$;
\item[(3)] If $\{N_i\}_{i\in I}\subseteq \mathcal{N}$,
then $\bigcap_{i\in I}N_i\in \mathcal{N}$;
\item[(4)] If $\{N_i\}_{i\in I}\subseteq \mathcal{N}$, then the
norm closure of the linear span of $\bigcup_{i\in I} N_i$ also lies
in $\mathcal{N}$.
\end{enumerate}
If $\mathcal{N}=\{0, \mathbf{H}\}$, then $\mathcal{N}$ is called a
trivial nest, otherwise it is called a non-trivial nest.

The \textit{nest algebra} associated with $\mathcal{N}$ is the set
$$
\mathcal{T}(\mathcal{N})=\{\hspace{3pt} T\in
\mathcal{B}(\mathbf{H})\hspace{3pt}| \hspace{3pt} T(N)\subseteq N
\hspace{3pt} {\rm for} \hspace{3pt} {\rm all} \hspace{3pt} N\in
\mathcal{N}\} .
$$
A nontrivial nest algebra is a triangular algebra. Indeed, if $N\in
\mathcal{N}\backslash \{0, {\mathbf H}\}$ and $E$ is the orthogonal
projection onto $N$, then $\mathcal{N}_1=E(\mathcal{N})$ and
$\mathcal{N}_2=(1-E)(\mathcal{N})$ are nests of $N$ and $N^{\perp}$,
respectively. Moreover,
$\mathcal{T}(\mathcal{N}_1)=E\mathcal{T}(\mathcal{N})E,
\mathcal{T}(\mathcal{N}_2)=(1-E)\mathcal{T}(\mathcal{N})(1-E)$ are
nest algebras and
$$
\mathcal{T}(\mathcal{N})=\left[
\begin{array}
[c]{cc}%
\mathcal{T}(\mathcal{N}_1) & E\mathcal{T}(\mathcal{N})(1-E)\\
O & \mathcal{T}(\mathcal{N}_2)\\
\end{array}
\right].
$$
Note that any finite dimensional nest algebra is isomorphic to a
complex block upper triangular matrix algebra. We refer the reader
to \cite{Davidson1988} for the theory of nest algebras.

\subsection{Matrix Incidence algebras}
\label{xxsec2.4}

Let $\mathbb{K}$ be a field and $A$ be a unital algebra over
$\mathbb{K}$. Let $X$ be a partially ordered set with
the partial order $\leq$. We define the \textit{incidence algebra}
of $X$ over $A$ as
$$
I(X, A)=\{f: X\times X \longrightarrow A \hspace{2pt} | \hspace{2pt}
f(x,y)=0 \hspace{4pt} {\rm if} \hspace{4pt} x\nleq y\}
$$
with algebraic operation given by
\begin{align*}
(f+g)(x, y) & =f(x, y)+g(x, y),\\
(f * g)(x, y) & =\sum_{x \leq z\leq y}f(x, z)g(z, y), \\
(k \cdot f)(x, y)& =k \cdot f(x, y)
\end{align*}
for all $f, g\in I(X, A), k\in \mathbb{K}$ and $x, y, z\in X$.
Obviously, $f$ is a $A$-valued function on $\{(x,y)\in X\times X
\vert x\leq y\}$. The product $*$ is usually called
\textit{convolution} in function theory. In particular, if $X$ be finite partially ordered
set with $n$ elements, then $I(X, A)$ is isomorphic to a
subalgebra of the algebra $M_n(A)$ of square matrices over
$\mathbb{K}$ with elements $[a_{ij}]_{n\times n}\in M_n(A)$ satisfying
$a_{ij}=0$ if $i\nleq j$, for some partial order $\leq$ defined in
the partial order set (poset) $\{1, \cdots, n\}$ \cite[Proposition
1.2.4]{SpiegelDonnell1997}. More precisely, $I(X, A)$ is isomorphic to an
upper triangular matrix algebra with entries $A$ or 0.
We will call such incidence algebras \textit{matrix incidence algebras}.
In fact, any incidence algebra arising from a finite partially ordered set is isomorphic to
some matrix incidence algebra $I(X, A)$, where $\leq$ is consistent with the natural order.
Nevertheless, we can not say that each matrix incidence algebra is
a triangular algebra in which $M$ is a faithful $(A, B)$-bimodule in usual.
Not all incidence algebras meet this condition.
If $X$ is a finite partial ordered set which is connected, then each matrix incidence algebra
$I(X, A)$ can be considered as a triangular algebra.

To illustrate this conclusion, let us see an intuitional
example. Let $X=\{1, 2, 3, 4, 5, 6, 7\}$ be a partially ordered set
and its relations generated by
$$
\{1\leq 3, 2\leq 3, 3\leq 4, 4\leq 5, 5\leq 6, 5\leq 7\}.
$$
We represent this partially ordered set $X$ by the following diagram
$$
X=\left[
\begin{array}{c}
\xymatrix{ 1  \ar[dr] & & & & 6\\
& 3 \ar[r] & 4 \ar[r] & 5 \ar[ur] \ar[dr] & \\
2  \ar[ur] &  &  &  & 7 }
\end{array}
\right].
$$
Then we have
$$
I(X, A)\cong \left[
\begin{array}{ccccccc}
A & 0 & A & A & A & A & A\\
0 & A & A & A & A & A & A\\
0 & 0 & A & A & A & A & A\\
0 & 0 & 0 & A & A & A & A\\
0 & 0 & 0 & 0 & A & A & A\\
0 & 0 & 0 & 0 & 0 & A & 0\\
0 & 0 & 0 & 0 & 0 & 0 & A
\end{array}
\right] .
$$
The incidence algebra of a partially ordered set (poset) $X$ is the
algebra of functions from the segments of $X$ into an
$\mathbb{K}$-algebra $A$, which extends the various convolutions in
algebras of arithmetic functions. Incidence algebras, in fact, were
first considered by Ward \cite{Ward1937} as generalized algebras of
arithmetic functions. Rota and Stanley developed incidence algebras
as the fundamental structures of enumerative combinatorial theory
and allied areas of arithmetic function theory. The theory of
M\"{o}bius functions, including the classical M\"{o}bius function of
number theory and the combinatorial inclusion-exclusion formula, is
established in the context of incidence algebras. We refer to the
reader \cite{Stanley1997} for all these. On the other hand, the
algebraic properties of incidence algebras are quite striking as
well, including the fact that the lattice of ideals (in the
finite-dimensional case) is distributive, and that the partial order
can be recovered from the algebra. The latter has led to a complete
description of the automorphisms and derivations of the algebra
\cite{Stanley1970}.

In the theory of operator algebras, incidence algebras are refereed
to as ``bigraph algebras" or ``finite dimensional CSL algebras". For
a finite dimensional Hilbert space $\mathbf{H}$ and the algebra
$\mathcal{B}(\mathbf{H})$ of all bounded linear operators on
$\mathbf{H}$. A digraph algebra is a subalgebra $A$ of
$\mathcal{B}(\mathbf{H})$ which contains a maximal abelian
self-adjoint subalgebra $\mathcal{D}$ of $\mathcal{B}(\mathbf{H})$.
Since $\mathcal{D}$ is maximal abelian, the invariant projections
for $A$, ${\rm Lat}A$, are elements of $\mathcal{D}$ and so are
mutually commuting. Thus $A$ is a CSL-algebra (The abbreviation CSL
denotes `commutative subspace lattice'). Obviously, $A$ is finite
dimensional; on the other hand, every finite dimensional CSL-algebra
acts on a finite dimensional Hilbert space and contains a mass. The
term digraph algebra refers to the fact that associated with $A$
there is a directed graph on the set of vertices $\{1, 2, \cdots,
n\}$. This graph contains all the self loops. Then A contains the
matrix unit $e_{ij}$ if and only if there is a (directed) edge from
$j$ to $i$ in the digraph.

Let $I(X, A)$ be an incidence algebra of $X$ over $A$. The identity
element $\epsilon$ of $I(X, A)$ is given by $\epsilon(x,
y)=\delta_{xy}$ for all $x\leq y$, where $\delta_{xy}\in \{0, 1\}$
is the Kronecker sign. For each pair $x, y\in X$ with $x\leq y$ we
define $\epsilon_{xy}(u, v)=\delta_{xu}\delta_{yv}$ for all $u\leq
v$. Then $\epsilon_{xy}*\epsilon_{zu}=\delta_{yz}\epsilon_{xu}$ and
$\epsilon_{xy}a=a\epsilon_{xy}$ for all $a\in A$. Let $1\leq n \leq
\infty$, let $X=\{1, 2, \cdots, n\}$ if $n<\infty$, or $X=\{1, 2,
\cdots, \}$ if $n=\infty$, and endow $X$ with the usual linear
ordering. Then $I(X, A)$ can be identified with the upper triangular
matrix algebra $T_n(A)$ by identifying $\epsilon_{xy}$ with the
matrix $[\delta_{xi}\delta_{yj}]_{i, j=1}^n$. Note that the case
$T_{\infty}(A)$ is of infinite matrices. As another extreme case,
let $X=\{1, 2, \cdots, n\}$ with $1\leq n < \infty$. If $X$ has the
pre-order $\leq^\prime$, where $i\leq^\prime j$ for each pair $(i,
j)\in X\times X$, then $I(X, A)\cong M_n(A)$, the full matrix
algebras of $n\times n$ matrices over $A$. We now deduce some
results for the upper triangular matrix algebras $T_n(A)$ with
$1\leq n \leq \infty$ and for the full matrix algebras $M_n(A)$ with
$1\leq n < \infty$ from the results for incidence algebras.

\section{Lie Biderivations}
\label{xxsec3}

This part is the main part of our work, this part is mainly research Lie biderivation
of triangular algebras. In order to better explain our work, we prove the following important formulas.

\begin{lemma}\label{xxsec3.1}
Let $\mathcal{A}$ be a associative algebra over a commutative ring $\mathcal{R}$
 and $\phi: \mathcal{A}\times \mathcal{A}\rightarrow \mathcal{A}$ be a Lie biderivation on $\mathcal{A}$,
then $\phi$ has the fowwing properties:
$$
[\phi(x,a),[b,y]]+[\phi(x,b),[y,a]]=[\phi(y,a),[x,b]]+[\phi(y,b),[x,a]]
$$
for all $a,b,x,y\in \mathcal{A}$.
\end{lemma}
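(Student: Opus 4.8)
The plan is to evaluate $\phi([x,y],[a,b])$ in two different ways, using that $\phi$ is a Lie derivation in each of its two slots, and then to compare the outcomes via the Jacobi identity.

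First I would expand in the first slot and afterwards in the second. Since $\phi(\cdot,w)$ is a Lie derivation for every fixed $w$,
$$
\phi([x,y],[a,b])=[\phi(x,[a,b]),y]+[x,\phi(y,[a,b])],
$$
and since $\phi(u,\cdot)$ is a Lie derivation for every fixed $u$, expanding each inner term gives $\phi(x,[a,b])=[\phi(x,a),b]+[a,\phi(x,b)]$ and $\phi(y,[a,b])=[\phi(y,a),b]+[a,\phi(y,b)]$, so that $\phi([x,y],[a,b])$ equals
$$
[[\phi(x,a),b],y]+[[a,\phi(x,b)],y]+[x,[\phi(y,a),b]]+[x,[a,\phi(y,b)]].
$$
Interchanging the roles of the two slots — expanding first in the second argument and then in the first — the same element $\phi([x,y],[a,b])$ equals
$$
[[\phi(x,a),y],b]+[[x,\phi(y,a)],b]+[a,[\phi(x,b),y]]+[a,[x,\phi(y,b)]].
$$

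Next I would equate these two expressions and bring everything to one side. The eight double commutators pair off according to which of $\phi(x,a)$, $\phi(x,b)$, $\phi(y,a)$, $\phi(y,b)$ each contains, and each pair collapses to a single term upon one application of the Jacobi identity in the form $[[u,v],w]-[[u,w],v]=[u,[v,w]]$. For instance the pair carrying $\phi(x,a)$ is $[[\phi(x,a),b],y]-[[\phi(x,a),y],b]=[\phi(x,a),[b,y]]$, and the other three pairs reduce in exactly the same way; assembling the four resulting terms yields the claimed identity.

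I expect the sole difficulty to be bookkeeping: eight double-commutator terms must be matched correctly, and the signs require care, since a single misapplied antisymmetry $[p,q]=-[q,p]$ or a misused instance of Jacobi reverses the sign of an entire term. Otherwise the argument is completely formal — it uses nothing about $\mathcal{A}$ beyond associativity (equivalently, the Jacobi identity for the commutator) together with the two defining relations of a Lie biderivation.
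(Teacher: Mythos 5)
Your route is the same as the paper's: expand $\phi([x,y],[a,b])$ once in each order and compare, and your two expansions are correct (they are the paper's (3.1) and (3.2)). The gap is exactly at the step you wave through: ``the other three pairs reduce in exactly the same way; assembling the four resulting terms yields the claimed identity.'' They do not all reduce the same way, and the assembly does not give the stated identity. Subtracting the second expansion from the first and grouping as you propose, the $\phi(x,a)$ and $\phi(x,b)$ pairs indeed give $[\phi(x,a),[b,y]]$ and $[\phi(x,b),[y,a]]$, but the remaining pairs have a different shape: the $\phi(y,a)$ pair is $[x,[\phi(y,a),b]]-[[x,\phi(y,a)],b]=[\phi(y,a),[x,b]]$ and the $\phi(y,b)$ pair is $[x,[a,\phi(y,b)]]-[a,[x,\phi(y,b)]]=[[x,a],\phi(y,b)]=-[\phi(y,b),[x,a]]$. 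Hence what the double expansion actually proves is
$$
[\phi(x,a),[b,y]]+[\phi(x,b),[y,a]]+[\phi(y,a),[x,b]]-[\phi(y,b),[x,a]]=0,
$$
that is, $[\phi(x,a),[b,y]]+[\phi(x,b),[y,a]]=[\phi(y,a),[b,x]]+[\phi(y,b),[x,a]]$, which differs from the lemma in the sign of the $\phi(y,a)$ term.

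This is not a bookkeeping issue you can repair, because the identity as printed is false. Take $\phi(u,v)=[u,v]$, which is a Lie biderivation of any associative algebra, and in the triangular algebra $T_3(\mathbb{C})$ take $x=e_{22}$, $y=e_{11}$, $a=e_{12}$, $b=e_{23}$: the left-hand side is $0+[e_{23},e_{12}]=-e_{13}$, while the right-hand side is $[e_{12},e_{23}]+0=e_{13}$. So any derivation that ``assembles'' to the stated equality necessarily contains a sign error; your own warning that a single flipped antisymmetry reverses a whole term is precisely what happens in the $\phi(y,a)$ pair. For comparison, the paper's proof shares this defect: its two expansions are correct, but the equality it extracts from them in the final line is the misprinted one rather than the identity with $[b,x]$ in place of $[x,b]$, which is what your method, carried out carefully, actually establishes (and is what should be quoted in the later lemmas).
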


\begin{proof}
Let $\phi: \mathcal{A}\times \mathcal{A}\rightarrow \mathcal{A}$ be a Lie biderivation
of $\mathcal{A}$.
For arbitrary $x,y,a,b\in \mathcal{A}$, let us compute $\phi([x,y],[a,b])$.
Since $\phi$ is a Lie derivation with respective to the first component, we have
$$
\begin{aligned}
\phi([x,y],[a,b])&=[\phi(x,[a,b]),y]+[x,\phi(y,[a,b])]\\
&=[[a,\phi(x,b)]+[\phi(x,a),b],y]+[x,[\phi(y,a),b]+[a,\phi(y,b)]]\\
&=[a\phi(x,b)-\phi(x,b)a+\phi(x,a)b-b\phi(x,a),y]\\
&+[x,\phi(y,a)b-b\phi(y,a)+a\phi(y,b)-\phi(y,b)a]\\
&=(a\phi(x,b)-\phi(x,b)a+\phi(x,a)b-b\phi(x,a))y\\
&-y(a\phi(x,b)-\phi(x,b)a+\phi(x,a)b-b\phi(x,a))\\
&+x(\phi(y,a)b-b\phi(y,a)+a\phi(y,b)-\phi(y,b)a)\\
&-(\phi(y,a)b-b\phi(y,a)+a\phi(y,b)-\phi(y,b)a)x;
\end{aligned}    \eqno{(3.1)}
$$
Likewise, the mapping $\phi$ is a Lie biderivation with respect to
the second component as well, we have
$$
\begin{aligned}
\phi([x,y],[a,b])&=[\phi([x,y],a),b]+[a,\phi([x,y],b)]\\
&=[[\phi(x,a),y]+[x,\phi(y,a)],b]+[a,[\phi(x,b),y]+[x,\phi(y,b)]]\\
&=[\phi(x,a)y-y\phi(x,a)+x\phi(y,a)-\phi(y,a)x,b]\\
&+[a,\phi(x,b)y-y\phi(x,b)+x\phi(y,b)-\phi(y,b)x]]\\
&=(\phi(x,a)y-y\phi(x,a)+x\phi(y,a)-\phi(y,a)x)b\\
&-b(\phi(x,a)y-y\phi(x,a)+x\phi(y,a)-\phi(y,a)x)\\
&+a(\phi(x,b)y-y\phi(x,b)+x\phi(y,b)-\phi(y,b)x)\\
&-(\phi(x,b)y-y\phi(x,b)+x\phi(y,b)-\phi(y,b)x)a.
\end{aligned}   \eqno{(3.2)}
$$

Comparing $(3.1)$ and $(3.2)$, one can obtain
$$
[\phi(x,a),[b,y]]+[\phi(y,b),[a,x]]=[\phi(x,b),[a,y]]+[\phi(y,a),[x,b]]
$$
For arbitrary $x,y,a,b\in \mathcal{A}$.
\end{proof}

Let $M$ be a unital $(A,B)$-bimodule, the mapping $f:M\rightarrow M$
satisfying $f(am)=af(m)$ and $f(mb)=f(m)b$
for all $a\in A, m\in M, b\in B$ is called \text{bimodule homomorphism}.
A bimodule homomorphism $f:M\rightarrow M$
is of the \text{standard form} if there exist $a_0\in\mathcal{ Z}(A),b_0\in \mathcal{Z}(B)$ such that
$$
f(m)=a_0m+mb_0                        \eqno{(3.3)}
$$
for all $m\in M$.

Below we give the main theorem of this paper.

\begin{theorem}\label{xxsec3.2}
Let $\mathcal{T}=
\left[\smallmatrix A & M\\
O & B \endsmallmatrix \right]$ be a triangular algebra over a commutative ring $\mathcal{R}$
and let $\phi: \mathcal{T}\times \mathcal{T}\longrightarrow \mathcal{T}$
be a Lie biderivation. If the following conditions holds:
\begin{itemize}
  \item [(i)] $\pi_{A}(\mathcal{Z}(\mathcal{T}))=\mathcal{Z}(A)$ and  $\pi_{B}(\mathcal{Z}(\mathcal{T}))=\mathcal{Z}(B)$;
  \item [(ii)]at least one of the algebras $A$ and $B$ is noncommutative;
  \item [(iii)]each bimodule homomorphism $\mathfrak{f}:M\rightarrow M$ is of the standard form;
  \item [(iv)]if $\alpha a=0$, $\alpha\in \mathcal{Z}(A), 0\neq a\in A$, then $\alpha=0$.
\end{itemize}
Then every Lie biderivation $\phi: \mathcal{T}\times \mathcal{T}\rightarrow \mathcal{T}$ is of the form
$$
\phi(x,y)=\lambda_0[x,y]+[x,[y,\phi(e,e)]]+\mu(x,y)
$$
where for some $\lambda_0\in \mathcal{Z}(\mathcal{T})$ and
$\mu:\mathcal{T}\times \mathcal{T}\rightarrow \mathcal{Z}(\mathcal{T})$ is a central mapping
 for arbitrary $x,y\in \mathcal{T}$.
\end{theorem}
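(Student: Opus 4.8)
The plan is to run a Peirce analysis of $\phi$ with respect to the idempotents $e$ and $f=I-e$, writing $x=x_{11}+x_{12}+x_{22}$ and exploiting the relations $[e,m]=m$, $[f,m]=-m$ for $m\in\mathcal{T}_{12}$ and $[e,\mathcal{T}_{11}]=[e,\mathcal{T}_{22}]=0$, together with the Lie derivation identities that $\phi$ satisfies in each slot and the compatibility relation of Lemma \ref{xxsec3.1}. \textbf{Step 1 (off-diagonal localization).} For $m\in\mathcal{T}_{12}$ the identity $m=[e,m]$ and the Lie derivation property in the first variable give $\phi(m,y)=[\phi(e,y),m]+[e,\phi(m,y)]$; since the right-hand side always lies in $\mathcal{T}_{12}$, this forces $\phi(\mathcal{T}_{12},\mathcal{T})\subseteq\mathcal{T}_{12}$, and the mirror argument in the second slot gives $\phi(\mathcal{T},\mathcal{T}_{12})\subseteq\mathcal{T}_{12}$. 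Putting $x=e$ in the second-variable version of that identity yields $[\phi(e,e),m]=0$ for every $m\in\mathcal{T}_{12}$, so by the description of $\mathcal{Z}(\mathcal{T})$ recalled in Section \ref{xxsec2} one may write $\phi(e,e)=z_{0}+r$ with $z_{0}\in\mathcal{Z}(\mathcal{T})$ and $r\in\mathcal{T}_{12}$; hence $[x,[y,\phi(e,e)]]=[x,[y,r]]$, and the target extremal term is governed by the single element $r$.

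\textbf{Step 2 (diagonal behaviour).} For fixed $a$ the slices $x\mapsto\phi(x,a)$ and $x\mapsto\phi(a,x)$ are Lie derivations of $\mathcal{T}$, so, invoking hypothesis (i), each splits as a derivation of $\mathcal{T}$ plus a map into $\mathcal{Z}(\mathcal{T})$ that vanishes on commutators. Freezing the argument on the diagonal and using $[\mathcal{T}_{11},\mathcal{T}_{12}]=\mathcal{T}_{12}=[\mathcal{T}_{22},\mathcal{T}_{12}]$, I would compute the three Peirce components of $\phi(a,b)$ for $a,b\in\mathcal{T}_{11}\cup\mathcal{T}_{22}$. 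After subtracting off the candidate terms, the residual $\mathcal{T}_{12}$-valued contribution becomes, with one argument frozen, an $(A,B)$-bimodule homomorphism $M\to M$; by hypothesis (iii) it is of the standard form $m\mapsto a_{0}m+mb_{0}$, and hypothesis (i) turns $a_{0},b_{0}$ into one element of $\mathcal{Z}(\mathcal{T})$. Reinserting this and using Lemma \ref{xxsec3.1} to reconcile the two variables, one extracts a \emph{single} scalar $\lambda_{0}\in\mathcal{Z}(\mathcal{T})$ for which $\phi(a,b)-\lambda_{0}[a,b]-[a,[b,r]]\in\mathcal{Z}(\mathcal{T})$ on all diagonal pairs; hypothesis (ii) guarantees that the inner commutators of the noncommutative factor are plentiful enough to pin $\lambda_{0}$ down uniquely, and hypothesis (iv) forces the residual central factors to vanish rather than merely annihilate a nonzero element.

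\textbf{Step 3 (globalization).} Set $\mu(x,y):=\phi(x,y)-\lambda_{0}[x,y]-[x,[y,\phi(e,e)]]$. Steps 1 and 2 show that $\mu$ annihilates every pair with an argument in $\mathcal{T}_{12}$ and sends diagonal pairs into $\mathcal{Z}(\mathcal{T})$; bilinearity together with the Peirce splitting of arbitrary $x,y$ then upgrades this to $\mu(\mathcal{T},\mathcal{T})\subseteq\mathcal{Z}(\mathcal{T})$, while the discrepancy between using $r$ and using $\phi(e,e)$---namely the central summand $z_{0}$---is harmlessly absorbed into $\mu$. This yields the asserted decomposition.

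I expect Step 2 to be the main obstacle. Because each slice of $\phi$ is only a Lie derivation, it decomposes as a derivation plus a map into $\mathcal{Z}(\mathcal{T})$ that kills commutators, but this splitting is not canonical and need not depend linearly on the frozen argument; reconciling the two variables and extracting one common $\lambda_{0}$---as opposed to an argument-dependent family $\lambda_{0}(a)$---is exactly the point at which hypotheses (ii)--(iv) and the bimodule rigidity of (iii) have to be combined. A further structural difficulty, absent in the purely associative biderivation setting, is that the map $(x,y)\mapsto[x,[y,r]]$ is itself \emph{not} a Lie biderivation, so one cannot subtract it off and induct; the whole argument must be carried out in a single pass on $\phi$.
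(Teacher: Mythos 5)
Your Steps 1 and 3 are sound and essentially reproduce the paper's bookkeeping: Step 1 in particular gives, in a clean way, that the diagonal part of $\phi(e,e)$ is central and that $\phi$ sends any pair with an entry in $\mathcal{T}_{12}$ into $\mathcal{T}_{12}$, which is the content of Lemma \ref{xxsec3.3} and the opening computations of the paper's Lemmas 3.4--3.6. The genuine gap is Step 2, which is exactly where the theorem lives and which you leave as a plan. What has to be proved there, and what the paper spends Lemmas 3.4--3.7 on, is: (a) $\phi(a,m)=\alpha_0 am=-\phi(m,a)$ and $\phi(b,m)=\alpha_0 mb=-\phi(m,b)$ for one scalar $\alpha_0\in\pi_A(\mathcal{Z}(\mathcal{T}))$; (b) $\phi(m,n)=0$ (Lemma \ref{xxsec3.6}); (c) on diagonal pairs, $e\phi(a_1,a_2)f=a_1a_2\,e\phi(e,e)f=a_2a_1\,e\phi(e,e)f$, $e\phi(a,b)f=-a\,e\phi(e,e)f\,b$, and the centrality of the diagonal components, with $e\phi(a_1,a_2)e$ pinned down modulo the center by $\alpha_0[a_1,a_2]$ (Lemma \ref{xxsec3.7}). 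It is precisely in (a)--(c) that hypotheses (ii)--(iv) and Lemma \ref{xxsec3.1} are consumed: hypothesis (iii) together with (i) is applied to the bimodule homomorphisms $m\mapsto e\phi(e,m)f$, $m\mapsto e\phi(m,e)f$ and $n\mapsto e\phi(m,n)f$ to produce central scalars, and then Lemma \ref{xxsec3.1} evaluated on a noncommuting pair $[a,a^\prime]\neq 0$ from (ii), the faithfulness of $M$, and (iv) force $\alpha_0+\beta_0=0$ and $\alpha_m=0$, i.e.\ the two slots yield one and the same $\lambda_0$. You correctly flag this reconciliation as the main obstacle, but you do not carry it out, so the proposal stops short of a proof exactly where the content is.

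Two points in your Step 2 would need repair even as an outline. First, hypothesis (iii) cannot be applied to ``the residual $\mathcal{T}_{12}$-valued contribution of $\phi(a,b)$ for diagonal pairs'': such pairs contain no $M$-variable, and the maps to which (iii) applies are obtained by freezing one argument at $e$ (or at a fixed $m$) and letting the other run over $M$; the diagonal-pair formulas are then deduced from these together with Lemma \ref{xxsec3.1} and the relations $e\phi(1,x)f=e\phi(x,1)f=0$ of Lemma \ref{xxsec3.3}, not from (iii) directly. Second, the slice-wise appeal to the structure theorem for Lie derivations of triangular algebras does not by itself advance matters, for the reason you yourself state: the splitting of each slice into a derivation plus a central map is non-canonical and may depend on the frozen argument, so it does not produce a single $\lambda_0$; the paper avoids this route altogether and works by direct Peirce computations on $\phi$. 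So the architecture matches the paper's, but the decisive computations are missing.
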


In order to better prove the main theorem,
we need to obtain the following series of lemmas.

\begin{lemma}\label{xxsec3.3}
Let $\mathcal{T}=
\left[\smallmatrix A & M\\
O & B \endsmallmatrix \right]$ be a triangular algebra over a commutative ring $\mathcal{R}$
and let $\phi: \mathcal{T}\times \mathcal{T}\longrightarrow \mathcal{T}$
be a Lie biderivation.
\begin{enumerate}
  \item[(1)] $\phi(0,x)=\phi(x,0)=0$;
  \item[(2)] $\phi(1,x)=e\phi(1,x)e \oplus f\phi(1,x)f\in \mathcal{Z}(\mathcal{T})$ ~\text{and}~
             $\phi(x,1)=e\phi(x,1)e \oplus f\phi(x,1)f\in \mathcal{Z}(\mathcal{T})$ ;
  \item[(3)] $e\phi(e,e)f=-e\phi(f,e)f=-e\phi(e,f)f=e\phi(f,f)f$.
\end{enumerate}
for all $x\in \mathcal{T}$
\end{lemma}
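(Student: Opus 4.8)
The plan is to dispatch the three assertions one after another, each reducing to a short manipulation of the defining identities of a Lie biderivation together with $\mathcal{R}$-bilinearity and the description of $\mathcal{Z}(\mathcal{T})$ recalled in Section~\ref{xxsec2}. No deep machinery is needed; the whole content is bookkeeping, so I will keep the argument elementary.

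For (1) I would simply invoke $\mathcal{R}$-bilinearity of $\phi$: a bilinear map annihilates $0$ in either slot, whence $\phi(0,x)=\phi(x,0)=0$ for all $x\in\mathcal{T}$.

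For (2) the key observation is that $1$ is central, so $[1,z]=0$ for every $z\in\mathcal{T}$. Applying the Lie-derivation identity in the first variable with the first argument equal to $1$ gives
$$
0=\phi([1,z],y)=[\phi(1,y),z]+[1,\phi(z,y)]=[\phi(1,y),z]
$$
for all $z\in\mathcal{T}$, hence $\phi(1,y)\in\mathcal{Z}(\mathcal{T})$. Since every central element of $\mathcal{T}$ lies in $e\mathcal{T}e\oplus f\mathcal{T}f$ (as recorded in Section~\ref{xxsec2}), we get $e\phi(1,y)f=0$ and therefore $\phi(1,y)=e\phi(1,y)e\oplus f\phi(1,y)f$. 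The assertion for $\phi(x,1)$ follows in exactly the same way, now inserting $1$ into the second slot and using the Lie-derivation identity in the second variable.

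For (3) I would write $1=e+f$ and use bilinearity. From (2), $\phi(1,e)=\phi(e,e)+\phi(f,e)\in\mathcal{Z}(\mathcal{T})$, so its $e\mathcal{T}f$-component vanishes: $e\phi(e,e)f=-e\phi(f,e)f$. Likewise $\phi(e,1)=\phi(e,e)+\phi(e,f)\in\mathcal{Z}(\mathcal{T})$ gives $e\phi(e,e)f=-e\phi(e,f)f$, and $\phi(f,1)=\phi(f,e)+\phi(f,f)\in\mathcal{Z}(\mathcal{T})$ gives $e\phi(f,e)f=-e\phi(f,f)f$. Combining the first and the last of these yields $e\phi(e,e)f=e\phi(f,f)f$, and together with the middle two equalities this completes the chain $e\phi(e,e)f=-e\phi(f,e)f=-e\phi(e,f)f=e\phi(f,f)f$. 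There is essentially no obstacle here; the only points demanding a little care are choosing the slot into which one inserts $1$ so that the commutator $[1,\cdot]$ collapses, and quoting the structural fact $\mathcal{Z}(\mathcal{T})\subseteq e\mathcal{T}e\oplus f\mathcal{T}f$ so that centrality immediately kills the off-diagonal $e\mathcal{T}f$-component.
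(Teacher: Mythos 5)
Your proposal is correct and follows essentially the same route as the paper: part (2) via $0=\phi([1,z],y)=[\phi(1,y),z]$ and the description of $\mathcal{Z}(\mathcal{T})$ forcing the off-diagonal component to vanish, and part (3) by writing $1=e+f$ and comparing the resulting relations from both slots. The only (immaterial) difference is that for (1) you invoke bilinearity directly, whereas the paper writes $0=[0,0]$ and uses the Lie-derivation identity; both are equally valid.
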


\begin{proof}
  \item[(1)]
Since $\phi$ is a Lie derivation with respect to the first component, we have
$$
\begin{aligned}
\phi(0,x)&=\phi([0,0],x)\\
&=[0,\phi(0,x)]+[\phi(0,x),0]=0
\end{aligned} \eqno{(3.4)}
$$
for all $x\in \mathcal{T}$.

  \item[(2)]
Since $\phi$ is a Lie derivation with respect to the first component and also
use the relation $(3.4)$, we have
$$
\begin{aligned}
0=\phi(0,x)&=\phi([1,y],x)\\
&=[\phi(1,x),y]+[1,\phi(y,x)]=[\phi(1,x),y],
\end{aligned}
$$
for arbitrary $x, y\in \mathcal{T}$. Because of the arbitrariness of element $y\in \mathcal{T}$,
one can obtain $\phi(1,x)\in \mathcal{Z}(\mathcal{T})$.
Furthermore, we have
By an analogous manner of $\phi(1,x)$, we have $\phi(x,1)\in \mathcal{Z}(\mathcal{T})$ for all $x\in \mathcal{T}$.

  \item[(3)]
In view of $(2)$, one can obtain $e\phi(1,x)f=0$ for all $x\in \mathcal{T}$.
Further using the relation $e+f=1$ and taking $x=e$ and $x=f$ respectively, we get
$$
e\phi(e,e)f=-e\phi(f,e)f \ \ \text{and} \ \ e\phi(e,f)f=-e\phi(f,f)f . \eqno{(3.5)}
$$
By an analogous manner, according to the relation $(2)$, we obtain
$e\phi(x,1)f=0$ for all $x\in \mathcal{T}$. Furthermore, we have 
$$
e\phi(e,e)f=-e\phi(e,f)f \ \ \text{and} \ \ e\phi(f,e)f=-e\phi(f,f)f. \eqno{(3.6)}
$$
Comparing $(3.5)$ with $(3.6)$, we have
$$
e\phi(e,e)f=-e\phi(f,e)f=-e\phi(e,f)f=e\phi(f,f)f.
$$
\end{proof}

\begin{lemma}\label{xxsec3.4}
With notations as above, we have
\begin{enumerate}
  \item [(1)]$\phi(a,m)=\alpha_0am=-\phi(m,a)$;
  \item [(2)]$\phi(b,m)=\alpha_0mb=-\phi(m,b)$
\end{enumerate}
for all $a\in A, b\in B, m\in M$.
\end{lemma}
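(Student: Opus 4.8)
The plan is to pin down the four values $\phi(a,m)$, $\phi(m,a)$, $\phi(b,m)$, $\phi(m,b)$ by exploiting the commutator presentations $m=[e,m]=[m,f]$ together with the relations $[e,a]=[f,a]=0$ (valid since $a\in\mathcal T_{11}$) and $[e,b]=[f,b]=0$ (valid since $b\in\mathcal T_{22}$), and then reading off the Peirce components relative to $\mathcal T=\mathcal T_{11}\oplus\mathcal T_{12}\oplus\mathcal T_{22}$, using repeatedly that $[e,X]$ is exactly the $(1,2)$-block of $X$ and that $\mathcal T_{12}\mathcal T_{12}=\mathcal T_{12}\mathcal T_{11}=\mathcal T_{22}\mathcal T_{12}=0$. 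Everything rests on Lemmas~\ref{xxsec3.1} and \ref{xxsec3.3}.

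First I would localize all the images in $\mathcal T_{12}$. Writing $\phi(a,m)=\phi(a,[e,m])=[\phi(a,e),m]+[e,\phi(a,m)]$ and decomposing $\phi(a,e)$ into its three blocks, the bracket $[\phi(a,e),m]$ collapses to $e\phi(a,e)e\cdot m-m\cdot f\phi(a,e)f\in\mathcal T_{12}$, while $[e,\phi(a,m)]$ is the $(1,2)$-block of $\phi(a,m)$; comparing blocks forces $e\phi(a,m)e=f\phi(a,m)f=0$, i.e.\ $\phi(a,m)\in\mathcal T_{12}$, and simultaneously $e\phi(a,e)e\cdot m=m\cdot f\phi(a,e)f$ for all $m$, i.e.\ $e\phi(a,e)e\oplus f\phi(a,e)f\in\mathcal Z(\mathcal T)$. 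The same manipulation with $m=[m,f]$ in place of $m=[e,m]$, and with the first slot in place of the second, disposes of $\phi(m,a)$, $\phi(b,m)$, $\phi(m,b)$ and shows that $e\phi(a,f)e\oplus f\phi(a,f)f$, $e\phi(e,a)e\oplus f\phi(e,a)f$, $e\phi(f,a)e\oplus f\phi(f,a)f$, their $b$-counterparts, and $e\phi(e,e)e\oplus f\phi(e,e)f$ all lie in $\mathcal Z(\mathcal T)$. Finally a suitable substitution in Lemma~\ref{xxsec3.1} collapses, after the $\mathcal T_{12}$-terms cancel, to $[\phi(a,b),m]=0$ for all $m\in M$, so $\phi(a,b)$ has central diagonal for every $a\in A$, $b\in B$ (in particular $\phi(e,b)$ does).

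Next I would show that $m\mapsto\phi(e,m)$ is an $(A,B)$-bimodule homomorphism of $M\cong\mathcal T_{12}$. From $\phi(e,a'm)=\phi(e,[a',m])=[\phi(e,a'),m]+[a',\phi(e,m)]$ the first summand vanishes because $\phi(e,a')$ has central diagonal and the second equals $a'\phi(e,m)$ because $\phi(e,m)\in\mathcal T_{12}$; symmetrically $\phi(e,mb')=[\phi(e,m),b']+[m,\phi(e,b')]=\phi(e,m)b'$, using that $\phi(e,b')$ has central diagonal. By hypothesis~(iii) therefore $\phi(e,m)=a_0m+mb_0$ with $a_0\in\mathcal Z(A)$, $b_0\in\mathcal Z(B)$; since $\mathcal Z(B)=\pi_B(\mathcal Z(\mathcal T))$ by hypothesis~(i), we have $b_0=\tau(c_0)$ for a unique $c_0\in\mathcal Z(A)$, so $mb_0=c_0m$ and $\phi(e,m)=\alpha_0m$ with $\alpha_0:=a_0+c_0\in\mathcal Z(A)$, regarded inside $\mathcal Z(\mathcal T)$. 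Also $\phi(I,m)=0$: by Lemma~\ref{xxsec3.3}(2) the element $\phi(I,e)$ is central and $\phi(I,m)$ is diagonal, so $\phi(I,m)=[\phi(I,e),m]+[e,\phi(I,m)]=0$; hence $\phi(f,m)=-\phi(e,m)=-\alpha_0m$.

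The hard part will be to pass from $a=e$ to an arbitrary $a\in A$ (and, symmetrically, from $f$ to an arbitrary $b$), and to read off the antisymmetry — because a general $a\in\mathcal T_{11}$ is not a commutator, the bare commutator trick no longer determines $\phi(a,m)$. Here the plan is to feed the centrality facts of the first step together with $\phi(e,m)=\alpha_0m$ into Lemma~\ref{xxsec3.1}, taking its first-slot entries to be $a$ and $m$ and its second-slot entries to be a noncommuting pair in whichever of $A$, $B$ is noncommutative (available by hypothesis~(ii)); after the numerous $\mathcal T_{12}$-valued brackets cancel, what should survive is a relation of the shape $\bigl(\phi(a,m)-\alpha_0am\bigr)[B,B]=0$ (respectively $[A,A]\bigl(\phi(a,m)-\alpha_0am\bigr)=0$), and hypothesis~(iv) together with the faithfulness of $M$ then forces $\phi(a,m)=\alpha_0am$. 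The two identities of part~(2) come out the same way with $f$ replacing $e$ and $\phi(f,m)=-\alpha_0m$, and comparing the first-slot and second-slot computations yields $\phi(a,m)=-\phi(m,a)$ and $\phi(b,m)=-\phi(m,b)$. The standing bookkeeping hazard is keeping the three Peirce blocks straight; the conceptual crux is this last step, where hypotheses (ii) and (iv) are genuinely used.
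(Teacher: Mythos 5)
Your first two steps are sound and essentially reproduce the paper's own preparation: the Peirce localization $\phi(a,m),\phi(m,a),\phi(b,m),\phi(m,b)\in\mathcal{T}_{12}$, the centrality of the diagonal parts of $\phi(a,e)$, $\phi(e,a)$, $\phi(e,b)$, etc., and the identification $\phi(e,m)=\alpha_0m$ via the bimodule homomorphism $\mathfrak{h}(m)=e\phi(e,m)f$, the standard form (iii), condition (i), and $\phi(I,m)=0$. This is the same route as the paper's relations $(3.7)$--$(3.14)$.

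The gap is in your third step, and it is twofold. First, passing from $e$ to a general $a$ is not the hard part and needs neither Lemma \ref{xxsec3.1} nor hypotheses (ii), (iv): since $[a,e]=0$, the first-slot identity gives $0=\phi([a,e],m)=[\phi(a,m),e]+[a,\phi(e,m)]$, hence $e\phi(a,m)f=a\phi(e,m)$, and together with $\phi(a,m)\in\mathcal{T}_{12}$ this already yields $\phi(a,m)=a\phi(e,m)=\alpha_0am$ (similarly $\phi(m,a)=a\phi(m,e)$, $\phi(b,m)=\phi(f,m)b$, $\phi(m,b)=\phi(m,f)b$). Second, the substitution you propose cannot deliver what you want: in Lemma \ref{xxsec3.1} each $\phi$-term pairs a first-slot entry with a second-slot entry, so with first-slot entries $a,m$ and the noncommuting pair in the second slot the term $\phi(a,m)$ never appears; and even if some substitution produced $(\phi(a,m)-\alpha_0am)[b_1,b_2]=0$, neither (iv) (a cancellation statement for $\mathcal{Z}(A)$ acting on $A$, hence useless when only $B$ is noncommutative) nor faithfulness (which either kills an element of $A$ annihilating all of $M$, or needs the relation to hold against all of $M$) allows you to cancel a fixed commutator against the single module element $\phi(a,m)-\alpha_0am$, which depends on $m$. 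The place where Lemma \ref{xxsec3.1} together with (ii) and (iv) is genuinely needed --- and which your sketch dismisses with ``comparing the first-slot and second-slot computations'' --- is the antisymmetry: the second-slot homomorphism $\mathfrak{g}(m)=e\phi(m,e)f$ gives $\phi(m,e)=\beta_0m$ with an a priori unrelated $\beta_0\in\pi_A(\mathcal{Z}(\mathcal{T}))$, and one must prove $\alpha_0+\beta_0=0$. There the paper's substitution produces $(\alpha_0+\beta_0)[a,a']m=0$ for \emph{all} $m\in M$, where the unknown is a fixed central scalar; faithfulness then gives $(\alpha_0+\beta_0)[a,a']=0$ in $A$, and (iv) gives $\alpha_0+\beta_0=0$. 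Without that step your claimed identities $\phi(a,m)=-\phi(m,a)$ and $\phi(b,m)=-\phi(m,b)$ are unjustified.
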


\begin{proof}
Since $\phi$ is a Lie derivation with respect to the second component, we have
$$
\begin{aligned}
\phi(a,m)=\phi(a,[e,m])&=[\phi(a,e),m]+[e,\phi(a,m)]\\
&=\phi(a,e)m-m\phi(a,e)+e\phi(a,m)-\phi(a,m)e
\end{aligned}
$$
for all $a\in A, m\in M$. Multiplying the above equation by $e$ on the left side
and by $e$ on the right side, we have $e\phi(a,m)e=0$. Similarly, one can obtain $f\phi(a,m)f=0$ and
$$
e\phi(a,e)m=m\phi(a,e)f,      \eqno(3.7)
$$
for all $a\in A, m\in M$. Based on the above three formulas, we can get
$$
\phi(a,m)=e\phi(a,m)f      \eqno(3.8)
$$
for all $a\in A, \in M$.

Since $\phi$ is a Lie derivation with respect to the second component, we have
$$
\begin{aligned}
0=\phi(a,[b,e])&=[\phi(a,b),e]+[b,\phi(a,e)]\\
&=\phi(a,b)e-e\phi(a,b)+b\phi(a,e)-\phi(a,e)b
\end{aligned}
$$
for all $a\in A,b\in B$. Multiplying the above equation by $f$ on the left side
and $f$ on the right side, we have $b\phi(a,e)f=f\phi(a,e)b$, and then 
$$
f\phi(a,e)f\in \mathcal{Z}(B)   \eqno(3.9)
$$
for all $a\in A, b\in B$. Combining $(3.7)$ and $(3.9)$
together with the faithfulness of $A$-left module $M$, one can obtain
$$
e\phi(a,e)e\oplus f\phi(a,e)f\in \mathcal{Z}(\mathcal{T})      \eqno(3.10)
$$
for all $a\in A, m\in M$.

 Similarly, we have
$$
e\phi(a,f)e\oplus f\phi(a,f)f\in \mathcal{Z}(\mathcal{T})      \eqno(3.11)
$$
for all $a\in A, m\in M$.

Since $\phi$ is a Lie derivation with respect to the first component, we have
$$
\begin{aligned}
0=\phi(0,m)=\phi([a,e],m)&=[\phi(a,m),e]+[a,\phi(e,m)]\\
&=\phi(a,m)e-e\phi(a,m)+a\phi(e,m)-\phi(e,m)a
\end{aligned}  \eqno(3.12)
$$
for all $a\in A, m\in M$. Multiplying in $(3.12)$ by $e$ on the left side
and by $f$ on the right side, we have
$$
e\phi(a,m)f=a\phi(e,m)f                                       \eqno(3.13)
$$
for all $a\in A, m\in M$. Considering $(3.8)$ and $(3.13)$, we conclude that
$$
\phi(a,m)=a\phi(e,m)f\in M
$$
for all $a\in A, m\in M$. In analogous manner, one can check that
$$
\begin{aligned}
\phi(m,a)&=a\phi(m,e)f~\text{and}~
e\phi(e,a)e \oplus f\phi(e,a)f\in \mathcal{Z}(\mathcal{T})~\text{and}~ e\phi(f,a)e\oplus f\phi(f,a)f\in \mathcal{Z}(\mathcal{T});\\
\phi(b,m)&=e\phi(f,m)b~\text{and}~
e\phi(b,e)e\oplus f\phi(b,e)f\in \mathcal{Z}(\mathcal{T})~\text{and}~e\phi(b,f)e\oplus f\phi(b,f)f\in \mathcal{Z}(\mathcal{T});\\
\phi(m,b)&=e\phi(m,f)b~\text{and}~
e\phi(e,b)e\oplus f\phi(e,b)f\in \mathcal{Z}(\mathcal{T})~\text{and}~e\phi(f,b)e\oplus f\phi(f,b)f\in \mathcal{Z}(\mathcal{T})
\end{aligned}    \eqno(3.14)
$$
for all $a\in A, b\in B, m\in M$.

 Let us see a mapping $\mathfrak{h}:M\rightarrow M$ be defined by $\mathfrak{h}(m)=e\phi(e,m)f$
for all $m\in M$, then $\mathfrak{h}$ is a bimodule homomorphism as a left $A$-module and also right $B$-module
for all $m\in M$.

Namely, for arbitrary $a\in A, b\in B, m\in M$,
since $\phi$ is a Lie derivation with respect to the first argument, in light of the relation $(3.10)$ and $(3.14)$
we can have
$$
\begin{aligned}
\mathfrak{h}(am)&=e\phi(e,am)f\\
&=e\phi(e,[a,m])f\\
&=e([\phi(e,a),m]+[a,\phi(e,m)])f\\
&=e(\phi(e,a)m-m\phi(e,a)f+a\phi(m,e)-\phi(m,e)a)f\\
&=m(\eta(e\phi(e,a)e)-\phi(e,a)f)+a\phi(m,e)f\\
&=a\phi(m,e)f\\
&=a\mathfrak{h}(m)
\end{aligned}
$$
and
$$
\begin{aligned}
\mathfrak{h}(mb)&=e\phi(e,mb)f&\\
&=e\phi(e,[m,b])f\\
&=e([\phi(e,m),b]+[m,\phi(e,b)])\\
&=e(\phi(e,m)b-b\phi(e,m)+m\phi(e,b)-\phi(e,b)m)f\\
&=e\phi(e,m)b+m\phi(e,b)f-e\phi(e,b)m\\
&=e(\phi(e,m)b+m(f\phi(e,b)f-\eta(e\phi(e,b)e))f\\
&=e\phi(e,m)b\\
&=\mathfrak{h}(m)b.
\end{aligned}
$$

The assumption $\text{(iii)}$ implies that the bimodule homomorphism
$\mathfrak{h}$ is of the \textbf{standard form}
$$
\mathfrak{h}(m)=a_0m+mb_0 =e\phi(e,m)f
$$
for some $a_0\in \mathcal{Z}(A)$ and $b_0\in \mathcal{Z}(B)$ and all $m\in M$.
Now we use the assumption $\text{(i)}$ to
see that $a_0\in \pi_{A}(\mathcal{Z}(\mathcal{T}))$ and $b_0\in \pi_{B}(\mathcal{Z}(\mathcal{T}))$.
We may write
$$
\mathfrak{h}(m)=\phi(e,m)=e\phi(e,m)f=(a_0+\eta^{-1}(b_0))m=\alpha_0m
$$
for all $m\in M$, where $\alpha_0=a_0+\eta^{-1}(b_0)\in \pi_{A}(\mathcal{Z}(\mathcal{T}))$.

Likely, one can define a mapping $\mathfrak{g}:M\rightarrow M$ defined by $\mathfrak{g}(m)=e\phi(m,e)f$ for all
$m\in M$, which is a bimodule homomorphism as a left $A$-module and also right $B$-module.
So there exists $\beta_0\in \pi_{A}(\mathcal{Z}(\mathcal{T}))$ so that $\phi(m,e)=\beta_0 m$.

Let us next show that
$$
\mathfrak{h}(m)=\alpha_0m=-\mathfrak{g}(m), ~\text{i.e.,} ~ e\phi(e,m)f==\alpha_0m=-e\phi(m,e)f
$$
for all $m\in M$. We need to prove that $\alpha_0+\beta_0=0$.

According to the assumption $\text{(ii)}$,
we may assume that $A$ is a noncommutative algebra. Choose $a,a^\prime\in A$ such that
$[a,a^\prime]\neq 0$. Since $\phi(e,m)=\alpha_0m$ and $\phi(m,e)=\beta_0m$, we by Lemma \ref{xxsec3.1} get
we have
$$
[\varphi(a,a^\prime),[e,m]]+[\varphi(a,e),[a^\prime,m]]=[\varphi(m,e),[a,a^\prime]]+[\varphi(m,a^\prime),[a,e]]
$$
for all $a,a^\prime\in A, m\in M$. In view of the relation $(3.10)$, we have
$$
[\varphi(a,a^\prime),m]=-[a,a^\prime]\varphi(m,e)f =-[a,a^\prime]\alpha_0 m,                   \eqno{(3.15)}
$$
for all $a,a^\prime\in A, m\in M$.

Adopting similar methods and using Lemma \ref{xxsec3.1}, we have
$$
[\varphi(a,a^\prime),[m,e]]+[\varphi(a,m),[a^\prime,e]]=[\varphi(e,m),[a,a^\prime]]+[\varphi(e,a^\prime),[a,m]]
$$
for all $a,a^\prime\in A, m\in M$. In view of the relation $(3.10)$, we obtain
$$
[\varphi(a,a^\prime),m]=[a,a^\prime]\varphi(e,m)f=[a,a^\prime]\beta_0 m                         \eqno{(3.16)}
$$
for all $a,a^\prime\in A, m\in M$.

It follows from the equalities $(3.15)$ and $(3.16)$ yields $(\alpha_0+\beta_0)[a,a^\prime]m=0$ for all $m\in M$.
The faithfulness of the left $A$-module $M$
now implies $(\alpha_0+\beta_0)[a,a^\prime]=0$. Since $[a,a^\prime]\neq 0$ we conclude using the condition \text{(iv)}
that $\alpha_0+\beta_0=0$. Considering $\alpha_0+\beta_0=0$ and $\varphi(f,m)+\varphi(e,m)=0$ together with $\varphi(m,e)+\varphi(m,f)=0$, we see that
$$
\varphi(m,f)=\alpha_0 m=-\varphi(f,m)
$$
for all $m\in M$.

Let $a\in A$, and $m\in M$ be arbitrary elements, we can achieve
$$
\varphi(a,m)=a\varphi(e,m)f=\alpha_0 am.
$$
This proves the first equality. The other three equations can be proven in an analogous manner.
\end{proof}

\begin{lemma}\label{xxsec3.5}
With notations as above, we have
\begin{enumerate}
  \item [(1)]
$\phi(a,b)=e\phi(a,b)e-a\phi(e,e)b+f\phi(a,b)f$,
where $e\phi(a,b)e\oplus f\phi(a,b)f \in \mathcal{Z}(\mathcal{T})$;
  \item [(2)]
  $\phi(b,a)=e\phi(b,a)e-a\phi(f,f)b+f\phi(b,a)f$,
where $e\phi(b,a)e\oplus f\phi(b,a)f \in \mathcal{Z}(\mathcal{T})$
\end{enumerate}
for all $a\in A, b\in B$.
\end{lemma}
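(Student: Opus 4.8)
The plan is to work in the Peirce decomposition $\mathcal{T}=e\mathcal{T}e\oplus e\mathcal{T}f\oplus f\mathcal{T}f$ and to determine the three blocks of $\phi(a,b)$ (and of $\phi(b,a)$) one at a time, using the Lie derivation identity in each slot together with Lemmas \ref{xxsec3.3} and \ref{xxsec3.4}. Two bookkeeping facts are used throughout. First, $e\mathcal{T}e\cdot f\mathcal{T}f=f\mathcal{T}f\cdot e\mathcal{T}e=(e\mathcal{T}f)(e\mathcal{T}f)=0$, so multiplying an element of $\mathcal{T}$ on either side by $e$, by $f$, or by an $m\in M$ extracts a single Peirce block; in particular, for every $t\in\mathcal{T}$, $[t,e]=-etf$, $[t,f]=etf$, and $[t,m]=(ete)m-m(ftf)$. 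Second, by hypothesis (i) any $c=c_{11}\oplus c_{22}\in\mathcal{Z}(\mathcal{T})$ has $c_{11}\in\pi_A(\mathcal{Z}(\mathcal{T}))=\mathcal{Z}(A)$ and $c_{22}\in\pi_B(\mathcal{Z}(\mathcal{T}))=\mathcal{Z}(B)$, so $[a,c_{11}]=0$ and $[b,c_{22}]=0$; this will repeatedly be applied to the diagonal blocks of $\phi(e,a),\phi(f,a),\phi(b,e),\phi(b,f),\phi(e,b),\phi(f,b)$, whose centrality is exactly the content of the relations (3.10)--(3.14) derived inside the proof of Lemma \ref{xxsec3.4}.

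\emph{Step 1 (the diagonal of $\phi(a,b)$ and of $\phi(b,a)$ is central).} Compute $\phi(a,[b,m])$ two ways for $m\in M$. Since $bm=0$, $[b,m]=-mb\in M$, so Lemma \ref{xxsec3.4}(1) gives $\phi(a,[b,m])=-\phi(a,mb)=-\alpha_0 amb$; expanding by the second-slot identity and using $\phi(a,m)=\alpha_0 am$ gives $\phi(a,[b,m])=[\phi(a,b),m]+[b,\phi(a,m)]=(e\phi(a,b)e)m-m(f\phi(a,b)f)-\alpha_0 amb$. Hence $(e\phi(a,b)e)m=m(f\phi(a,b)f)$ for all $m\in M$, which by the description of $\mathcal{Z}(\mathcal{T})$ means $e\phi(a,b)e\oplus f\phi(a,b)f\in\mathcal{Z}(\mathcal{T})$. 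The same computation applied to $\phi(b,[a,m])$ (now $[a,m]=am$, and Lemma \ref{xxsec3.4}(2) is used) gives $e\phi(b,a)e\oplus f\phi(b,a)f\in\mathcal{Z}(\mathcal{T})$.

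\emph{Step 2 (the off-diagonal block of $\phi(a,b)$).} From $[a,e]=0$ and the first-slot identity, $0=\phi([a,e],b)=[\phi(a,b),e]+[a,\phi(e,b)]=-e\phi(a,b)f+[a,\phi(e,b)]$, so $e\phi(a,b)f=[a,\phi(e,b)]$; since $e\phi(e,b)e\in\mathcal{Z}(A)$ by (3.14), the commutator collapses to $e\phi(a,b)f=a\,(e\phi(e,b)f)$. To identify $e\phi(e,b)f$, use $[b,f]=0$ and the second-slot identity: $0=\phi(e,[b,f])=[\phi(e,b),f]+[b,\phi(e,f)]=e\phi(e,b)f+[b,\phi(e,f)]$. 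By Lemma \ref{xxsec3.3}(3), $e\phi(e,f)f=-e\phi(e,e)f$, and $f\phi(e,f)f\in\mathcal{Z}(B)$ by (3.14), so $[b,\phi(e,f)]=-(e\phi(e,f)f)b=(e\phi(e,e)f)b$, giving $e\phi(e,b)f=-(e\phi(e,e)f)b$ and therefore $e\phi(a,b)f=-a\,(e\phi(e,e)f)\,b=-a\phi(e,e)b$ (the other two Peirce blocks of $\phi(e,e)$ being annihilated in the product $a\,\phi(e,e)\,b$). Combining with Step 1, $\phi(a,b)=e\phi(a,b)e-a\phi(e,e)b+f\phi(a,b)f$ with central diagonal, which is part $(1)$.

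\emph{Part $(2)$ and the main difficulty.} Part $(2)$ is obtained by the mirror descent $\phi(b,a)\to\phi(f,a)\to\phi(f,e)$: from $[b,f]=0$, $e\phi(b,a)f=-[b,\phi(f,a)]=(e\phi(f,a)f)b$; from $[a,e]=0$, $e\phi(f,a)f=[a,\phi(f,e)]=a\,(e\phi(f,e)f)$; and $e\phi(f,e)f=-e\phi(e,e)f=-e\phi(f,f)f$ by Lemma \ref{xxsec3.3}(3), so $e\phi(b,a)f=-a\,(e\phi(f,f)f)\,b=-a\phi(f,f)b$; together with the diagonal statement from Step 1 this gives part $(2)$. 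I do not expect a genuinely hard step: the whole proof is a disciplined Peirce-calculus bookkeeping exercise. The only place that needs real care is the two-stage descent $\phi(a,b)\to\phi(e,b)\to\phi(e,f)$ (and its mirror $\phi(b,a)\to\phi(f,a)\to\phi(f,e)$), where at every stage one must check that the ``error'' commutators $[a,e\phi(e,b)e]$, $[b,f\phi(e,f)f]$, $[a,e\phi(f,e)e]$, $[b,f\phi(f,a)f]$ vanish --- which is exactly where hypothesis (i) and the centrality relations (3.10)--(3.14) from Lemma \ref{xxsec3.4} enter --- and one must carefully track which of the nine Peirce products is zero at each multiplication.
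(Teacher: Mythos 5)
Your proof is correct, and all the Peirce-block bookkeeping checks out, but it is organized differently enough from the paper's argument to be worth comparing. For the off-diagonal entry the paper expands $\phi([e,a],b)$, $\phi([b_1,a],b_2)$ and $\phi(a_1,[b,a_2])$ to obtain its relations $(3.17)$, $(3.19)$, $(3.23)$ and then combines $(3.19)$ and $(3.23)$ with $e\phi(1,x)f=0$ from Lemma 3.3(2) to get $e\phi(a,b)f=-a\phi(e,e)b$; you instead make the two-stage descent $\phi(a,b)\to\phi(e,b)\to\phi(e,f)$ through the degenerate brackets $[a,e]=0$ and $[b,f]=0$ and finish with Lemma 3.3(3) --- the same kind of manipulation, just routed through the idempotents rather than through general $a_2,b_1$, and relying on the centrality relations $(3.10)$--$(3.14)$ (with $a=1_A$, $b=1_B$ where needed) exactly as you say. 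The more substantive divergence is the centrality of the diagonal: the paper invokes the four-variable identity of Lemma 3.1 with a module element (the terms with $[e,b]=0$ and $[a,b]=0$ drop out), uses the centrality of the diagonal of $\phi(e,b)$, and then appeals to $(3.20)$, $(3.21)$ and faithfulness of $M$; you avoid Lemma 3.1 entirely by expanding $\phi(a,[b,m])$ and $\phi(b,[a,m])$ and substituting the explicit formulas $\phi(a,m)=\alpha_0 am$ and $\phi(b,m)=\alpha_0 mb$ from Lemma 3.4, so the $\alpha_0$-terms cancel and $(e\phi(a,b)e)m=m(f\phi(a,b)f)$ falls out, which is centrality by Cheung's description of $\mathcal{Z}(\mathcal{T})$. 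Your route buys a shorter, Lemma-3.1-free argument (and does not need a separate appeal to faithfulness at this point), at the cost of leaning on the full standard-form conclusion of Lemma 3.4, hence on hypotheses (i)--(iv); since Lemma 3.4 is established before Lemma 3.5 and does not depend on it, there is no circularity, and both proofs are valid.
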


\begin{proof}
  \item [(1)]Since $\phi$ is a Lie derivation with respect to the first component, we have
$$
\begin{aligned}
0&=\phi([e,a],b)\\
&=[e,\phi(a,b)]+[\phi(e,b),a]\\
&=e\phi(a,b)-\phi(a,b)e+\phi(e,b)a-a\phi(e,b)
\end{aligned}
$$
for all $a\in A, b\in B$. Multiplying the above equation by $e$ on the left side
and by $f$ on the right side, we can obtain
$$
e\phi(a,b)f=a\phi(e,b)f            \eqno{(3.17)}
$$
for all $a\in A, b\in B$.

Since $\phi$ is a Lie derivation with respect to the second component, one can have
$$
\begin{aligned}
0&=\phi([b_1,a],b_2)\\
&=[b_1,\phi(a,b_2)]+[\phi(b_1,b_2),a]\\
&=b_1\phi(a,b_2)-\phi(a,b_2)b_1+\phi(b_1,b_2)a-a\phi(b_1,b_2)
\end{aligned}
$$
for all $b_1, b_2\in B, a\in A$. Multiplying the above equation by $e$ on the left side
and by $e$ on the right side, we can obtain $e\phi(b_1,b_2)a=a\phi(b_1,b_2)e$ for all $b_1, b_2\in B$.
And then
$$
e\phi(b_1,b_2)e \in \mathcal{Z}(A)   \eqno{(3.18)}
$$
for all $b_1, b_2\in B$. Multiplying the above equation by $e$ on the left side
and by $f$ on the right side, we can obtain
$$
e\phi(a,b_2)b_1=-a\phi(b_1,b_2)f;    \eqno{(3.19)}
$$
for all $b_1, b_2\in B, a\in A$. Multiplying the above equation by $f$ on the left side
and by $f$ on the right side, we can obtain the relation $b_1\phi(a,b_2)f=f\phi(a,b_2)b_1$
for all $a\in A, b_1, b_2\in B$. And then
$$
f\phi(a,b_2)f\in \mathcal{Z}(B)    \eqno{(3.20)}
$$
for all $a\in A, b_1, b_2\in B$.

Similarly, Since $\phi$ is a Lie derivation with respect to the second component, we have
$$
\begin{aligned}
0&=\phi(a_1,[b,a_2])\\
&=[\phi(a_1,b),a_2]+[b,\phi(a_1,a_2)]\\
&=\phi(a_1,b)a_2-a_2\phi(a_1,b)+b\phi(a_1,a_2)-\phi(a_1,a_2)b
\end{aligned}
$$
for all $a_1, a_2\in A, b\in B$. Multiplying the above equation by $e$ on the left side
and by $e$ on the right side, one can check $e\phi(a_1,b)a_2=a_2\phi(a_1,b)e$, ie.,
$$
e\phi(a_1,b)e \in \mathcal{Z}(A)                      \eqno{(3.21)}
$$
for all $a_1, a_2\in A, b\in B$. Multiplying the above equation by $f$ on the left side
and by $f$ on the right side, we can obtain $b\phi(a_1,a_2)f=f\phi(a_1,a_2)b$, and then
$$
f\phi(a_1,a_2)f\in \mathcal{Z}(B)              \eqno{(3.22)}
$$
for all $a_1, a_2\in A, b\in B$. Multiplying the above equation by $e$ on the left side
and $f$ on the right side, we can obtain
$$
a_2\phi(a_1,b)f=-e\phi(a_1,a_2)b                           \eqno{(3.23)}
$$
for all $a_1, a_2\in A, b\in B$.  Combining $(3.19)$ and $(3.23)$ with the conclusion $(2)$
coming from Lemma \ref{xxsec3.3}, we have
$$
e\phi(a,b)f=-a\phi(e,e)b
$$
for all $a\in A, b\in B$. We therefore have
$$
\phi(a,b)=e\phi(a,b)e-a\phi(e,e)b+f\phi(a,b)f
$$

Now, we prove the following relation
$$
e\phi(a,b)e\oplus f\phi(a,b)f \in \mathcal{Z}(\mathcal{T})
$$
for all $a\in A, b\in B$.

Namely, according to Lemma \ref{xxsec3.1}, we receive the relation
$$
[\phi(a,b),[m,e]]+[\phi(a,m),[e,b]]=[\phi(e,b),[m,a]]+[\phi(e,m),[a,b]]
$$
for all $a\in A, b\in B, m\in M$. In view of the relation
$e\phi(e,b)e\oplus f\phi(e,b)f \in \mathcal{Z}(\mathcal{T})$, we can obtain
$$
\begin{aligned}
e\phi(a,b)m-m\phi(a,b)f&=-e\phi(e,b)am+am\phi(e,b)f\\
&=(\eta^{-1}(f\phi(e,b)f)-e\phi(e,b)e)am=0
\end{aligned}
$$
for all $a\in A, b\in B, m\in M$. Using the relations $(3.20), (3.21)$
and the faithfulness of left $A$-module $M$, we have
$$
e\phi(a,b)e\oplus f\phi(a,b)f\in \mathcal{Z}(\mathcal{T})
$$
for all $a\in A, b\in B$.

\item [(2)]
By an analogous manner of $(1)$, we have
$$
\phi(b,a)=e\phi(b,a)e-a\phi(f,f)b+f\phi(b,a)f,
$$
where $e\phi(b,a)e\oplus f\phi(b,a)f \in \mathcal{Z}(\mathcal{T})$
for all $a\in A, b\in B$.

\end{proof}

\begin{lemma}\label{xxsec3.6}
With notations as above, we have
$$\phi(m,n)=0$$
for all $m,n\in M$.
\end{lemma}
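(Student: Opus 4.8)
The plan is to locate $\phi(m,n)$ inside $\mathcal{T}$, then upgrade this to genuine $A$- and $B$-bilinearity in each slot, and finally feed the resulting bimodule homomorphism into hypotheses (iii), (i) and (iv) to collapse it to zero. Throughout one uses that $MM=0$, so $[m,n]=0$ for all $m,n\in M$, together with the elementary brackets $[e,m]=[m,f]=m$ and $[m,e]=[f,m]=-m$.

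First I would show $\phi(m,n)\in M$. Writing $m=[e,m]$ and using that $\phi$ is a Lie derivation in its first argument gives $\phi(m,n)=[\phi(e,n),m]+[e,\phi(m,n)]$; since $\phi(e,n)=\alpha_0 n\in M$ by Lemma~\ref{xxsec3.4}, the commutator $[\phi(e,n),m]$ vanishes, so $\phi(m,n)=[e,\phi(m,n)]=e\phi(m,n)-\phi(m,n)e$. Comparing the three corner components of this identity forces $e\phi(m,n)e=f\phi(m,n)f=0$, i.e. $\phi(m,n)\in M$ for all $m,n\in M$.

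Next I would establish the module identities
$$
\phi(am,n)=a\phi(m,n)=\phi(m,an),\qquad \phi(mb,n)=\phi(m,n)b=\phi(m,nb)
$$
for all $a\in A$, $b\in B$, $m,n\in M$. Each of these follows by substituting $am=[a,m]$, $mb=[m,b]$, $an=[a,n]$, $nb=[n,b]$ and applying the Lie derivation property in the appropriate variable: every mixed value that shows up, namely $\phi(a,n)$, $\phi(b,n)$, $\phi(m,a)$ or $\phi(m,b)$, lies in $M$ by Lemma~\ref{xxsec3.4}, so its bracket with an element of $M$ is zero, while $\phi(m,n)\in M$ gives $\phi(m,n)a=0$ and $b\phi(m,n)=0$, which is exactly what is needed to simplify the remaining commutators. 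With this, for each fixed $n\in M$ the map $m\mapsto\phi(m,n)$ is an $(A,B)$-bimodule homomorphism of $M$ into itself, so hypothesis (iii) yields $a_n\in\mathcal{Z}(A)$ and $b_n\in\mathcal{Z}(B)$ with $\phi(m,n)=a_nm+mb_n$; hypothesis (i) puts $a_n,b_n$ in $\pi_A(\mathcal{Z}(\mathcal{T})),\pi_B(\mathcal{Z}(\mathcal{T}))$, and through the isomorphism $\eta$ (the relation $am=m\eta(a)$) one rewrites $\phi(m,n)=\gamma_n m$ for a unique $\gamma_n\in e\mathcal{Z}(\mathcal{T})e\cong\mathcal{Z}(A)$, uniqueness following from the faithfulness of $M$.

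It remains to show $\gamma_n=0$. By hypothesis (ii) I may assume $A$ is noncommutative. Applying the displayed standard form to the second argument $an$ and comparing with $\phi(m,an)=a\phi(m,n)$ gives $\gamma_{an}m=(a\gamma_n)m$ for all $m$, hence $\gamma_{an}=a\gamma_n$ by faithfulness; since $\gamma_{an}$ and $\gamma_n$ are both central, $a\gamma_n$ is central, so $0=[a\gamma_n,x]=\gamma_n[a,x]$ for every $x\in A$. Choosing $a,x\in A$ with $[a,x]\neq0$, hypothesis (iv) forces $\gamma_n=0$, whence $\phi(m,n)=\gamma_n m=0$; since $n$ is arbitrary this proves $\phi(m,n)=0$ for all $m,n\in M$. (When instead $B$ is noncommutative the argument is symmetric, using the right-hand standard form $\phi(m,n)=m\zeta_n$ with $\zeta_n\in\mathcal{Z}(B)$ together with $\phi(m,nb)=\phi(m,n)b$.) The step I expect to be the crux is this last one: the decisive observation is that $\gamma_{an}=a\gamma_n$ combined with centrality of $\gamma_{an}$ forces the central element $\gamma_n$ to annihilate every commutator of $A$, which is precisely where noncommutativity (ii) and the torsion-type condition (iv) are consumed; the earlier steps are bookkeeping, but one must take care that all the auxiliary values supplied by Lemma~\ref{xxsec3.4} genuinely contribute nothing.
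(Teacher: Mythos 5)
Your argument is correct and, for its first two thirds, runs parallel to the paper's: you locate $\phi(m,n)$ in $M$, show that with one argument fixed the induced map of $M$ is an $(A,B)$-bimodule homomorphism via Lemma~\ref{xxsec3.4}, and then use hypotheses (iii) and (i) to write $\phi(m,n)=\gamma_n m$ with $\gamma_n$ central (the paper fixes the first argument instead and writes $\phi(m,n)=\alpha_m n$; that difference is only bookkeeping). Where you genuinely diverge is the last step. The paper kills the central coefficient by invoking the biderivation identity of Lemma~\ref{xxsec3.1}: substituting $a,a',m,n$ there and using $\phi(A,M)\subseteq M$ and $\phi(m,n)\in M$, it obtains $[a,a']\phi(m,n)=0$ outright, after which faithfulness and (iv) finish. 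You never touch Lemma~\ref{xxsec3.1}: you use linearity in the other variable, $\phi(m,an)=a\phi(m,n)$, to get $\gamma_{an}=a\gamma_n$ by faithfulness, and then centrality of both $\gamma_{an}$ and $\gamma_n$ forces $\gamma_n[a,x]=0$ for all $a,x\in A$, so (ii) and (iv) give $\gamma_n=0$. The two mechanisms consume exactly the same hypotheses; yours is slightly more self-contained, needing only the one-variable Lie derivation property and the module structure, while the paper's is a one-line consequence once Lemma~\ref{xxsec3.1} is in hand. One shared caveat: your closing remark on the case where only $B$ is noncommutative would require a $B$-analogue of hypothesis (iv), which the theorem does not state; the paper's own ``without loss of generality'' has precisely the same gap, so this is not a defect of your proof relative to the paper's.
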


\begin{proof}
For all $m,n\in M$, we have
$$
\begin{aligned}
\varphi(m,n)&=\varphi([e,m],n)\\
&=[\varphi(e,n),m]+[e,\varphi(m,n)]\\
&=\varphi(e,n)m-m\varphi(e,n)+e\varphi(m,n)+\varphi(m,n)e.
\end{aligned}
$$
Multiplying the above equation by $e$ on the left and by $e$ on the right side,
we have $e\varphi(m,n)e=0$ for all $m\in M, n\in N$.
Similarly, one can obtain $f\varphi(m,n)f=0$ for all $m\in M, n\in N$.
By invoking to above relations, we immediately see that
$$
\varphi(m,n)=e\varphi(m,n)f \in M             \eqno{(3.24)}
$$
for all $m\in M, n\in N$.

Fix a element $m\in M$, then the mapping $\mathfrak{k}:M\rightarrow M$ defined
by $\mathfrak{k}(m)=\varphi(m,n)=e\varphi(m,n)f$ for all $n\in M$ is a bimodule homomorphism
as a left $A$-module and also right $B$-module.

In fact, using Lemma \ref{xxsec3.4} and $(3.24)$, we have
$$
\begin{aligned}
\mathfrak{k}(an)&=e\varphi(m,[a,n])f\\
&=e([\varphi(m,a),n]+[a,\varphi(m,n)])f\\
&=e(\varphi(m,a)n-n\varphi(m,a)+a\varphi(m,n)-\varphi(m,n)a)f\\
&=a\varphi(m,n)f\\
&=a\mathfrak{k}(n)
\end{aligned}
$$
and
$$
\begin{aligned}
\mathfrak{k}(nb)&=e\varphi(m,nb)f\\
&=\varphi(m,[n,b])\\
&=e([\varphi(m,n),b]+[n,\varphi(m,b])\\
&=e(\varphi(m,n)b-b\varphi(m,n)+n\varphi(m,b)-\varphi(m,b)n)f\\
&=e\varphi(m,n)b\\
&=\mathfrak{k}(n)b
\end{aligned}
$$
for all $b\in B, m, n\in M$.
For fixing $m\in M$, it follows from the assumption \text{(iii)} that there exists
$\alpha_m\in \mathcal{Z}(A)$ such that
$$
\varphi(m,n)=k(n)=\alpha_m n  ~\text{for all} ~n\in M.   \eqno{(3.25)}
$$

Without loss of generality, we might assume that that $A$ is a noncommutative algebra, and let
$a,a^\prime\in A$ be fixed elements such that $[a,a^\prime]\neq 0$. Using Lemma \ref{xxsec3.1}, we have
$$
[\varphi(a,a^\prime), [n,m]]+[\varphi(a,n), [a^\prime,m]]=[\varphi(m,n),[a,a^\prime]]+[\varphi(m,a^\prime), [a,m]],
$$
for all $m, n\in M$. Using $(3.24)$ and $(3.25)$, we may write
$$
0=[\varphi(m,n),[a,a^\prime]]=[a,a^\prime]\varphi(m,n)=[a,a^\prime]\alpha_m n
$$
for all $m,n\in M$. The faithfulness of the left $A$-module implies $[a,a^\prime]\alpha_m=0$ for every $m\in M$.
In view of the assumption $\text{(iv)}$, we obtain that $\alpha_m=0$ for all $m\in M$.
We therefore say that $\varphi(m,n)=0$ for all $m,n\in M$.
\end{proof}

\begin{lemma}\label{xxsec3.7}
With notations as above, we have
\begin{enumerate}
  \item [(1)] For arbitrary $a_1,a_2\in A$, we have $$
\begin{aligned}
\phi(a_1,a_2)&=e\phi(a_1,a_2)e+a_1a_2\phi(e,e)f+f\phi(a_1,a_2)f\\
&=e\phi(a_1,a_2)e+a_2a_1\phi(e,e)f+f\phi(a_1,a_2)f,
\end{aligned}
$$
where $f\phi(a_1,a_2)f \in \mathcal{Z}(A)$ and $
e\phi(a_1,a_2)e=\eta^{-1}(f\phi(a_1,a_2)f)-\alpha_0[a_1,a_2]
$;
  \item [(2)]For arbitrary $a_1,a_2\in A$, we have $$
\begin{aligned}
\phi(b_1,b_2)&=e\phi(b_1,b_2)e+e\phi(e,e)b_1b_2+f\phi(b_1,b_2)f\\
&=e\phi(b_1,b_2)e+e\phi(e,e)b_1b_2+f\phi(b_1,b_2)f,
\end{aligned}
$$
where $e\phi(b_1,b_2)e \in \mathcal{Z}(A)$ and
$
f\phi(b_1,b_2)f=\eta(e\phi(b_1,b_2)e)-\eta(\alpha_0)[b_1,b_2]
$
for all $b_1,b_2\in B$.
\end{enumerate}
\end{lemma}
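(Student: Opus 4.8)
The plan is to evaluate $\phi(a_1,a_2)$ (and later $\phi(b_1,b_2)$) block by block with respect to the Peirce decomposition $\mathcal{T}=e\mathcal{T}e\oplus e\mathcal{T}f\oplus f\mathcal{T}f$, the block $f\mathcal{T}e$ being zero since $\mathcal{T}$ is triangular. All the ingredients are already in place: the identities expressing that $\phi$ is a Lie derivation in each slot, the values $\phi(a,m)=\alpha_0am$, $\phi(m,a)=-\alpha_0am$, $\phi(b,m)=\alpha_0mb$, $\phi(m,b)=-\alpha_0mb$ and the central scalar $\alpha_0\in\pi_A(\mathcal{Z}(\mathcal{T}))=\mathcal{Z}(A)$ from Lemma \ref{xxsec3.4}, the relations (3.17)--(3.23) buried in the proof of Lemma \ref{xxsec3.5} (notably (3.18): $e\phi(b_1,b_2)e\in\mathcal{Z}(A)$, and (3.22): $f\phi(a_1,a_2)f\in\mathcal{Z}(B)$), and $e\phi(e,e)f=e\phi(f,f)f$ from Lemma \ref{xxsec3.3}(3). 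The only manoeuvre I will repeat is: apply the Lie-derivation property to a bracket that is forced to be $0$ (or to land in $M$), then multiply by $e$ and $f$ on the appropriate sides to extract a single Peirce component.

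For the off-diagonal block I will prove $e\phi(a_1,a_2)f=a_1a_2\,e\phi(e,e)f$. Since $[a_1,e]=0$, writing out $\phi([a_1,e],a_2)=0$ via the Lie-derivation property in the first variable and then multiplying on the left by $e$ and on the right by $f$ gives $e\phi(a_1,a_2)f=a_1\,e\phi(e,a_2)f$; applying the same trick to $\phi(e,[a_2,e])=0$ in the second variable gives $e\phi(e,a_2)f=a_2\,e\phi(e,e)f$, and composing yields the claim. Running the two peeling steps in the opposite order (first $\phi(a_1,[a_2,e])=0$ in the second variable, then $\phi([a_1,e],e)=0$ in the first) produces instead $e\phi(a_1,a_2)f=a_2a_1\,e\phi(e,e)f$; in particular $[a_1,a_2]\,e\phi(e,e)f=0$, and the two displayed decompositions in (1) coincide.

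For the diagonal blocks, $f\phi(a_1,a_2)f\in\mathcal{Z}(B)$ is relation (3.22), so by hypothesis (i) the element $\eta^{-1}(f\phi(a_1,a_2)f)\in\mathcal{Z}(A)$ is well defined. To identify $e\phi(a_1,a_2)e$ I will evaluate $\phi([a_1,m],a_2)$ for $m\in M$: here $[a_1,m]=a_1m\in M$, so the left-hand side equals $-\alpha_0a_2a_1m$ by Lemma \ref{xxsec3.4}, while expanding the right-hand side $[\phi(a_1,a_2),m]+[a_1,\phi(m,a_2)]$, substituting $\phi(m,a_2)=-\alpha_0a_2m$, and using $\phi(a_1,a_2)m=(e\phi(a_1,a_2)e)m$, $m\phi(a_1,a_2)=m\bigl(f\phi(a_1,a_2)f\bigr)=\eta^{-1}(f\phi(a_1,a_2)f)m$ together with the centrality of $\alpha_0$, one collects everything into an equation of the form $\bigl(e\phi(a_1,a_2)e-\eta^{-1}(f\phi(a_1,a_2)f)\pm\alpha_0[a_1,a_2]\bigr)m=0$ valid for all $m\in M$; faithfulness of $M$ as a left $A$-module then yields the stated formula for $e\phi(a_1,a_2)e$, and assembling the three blocks proves (1).

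Part (2) is the mirror image and is carried out the same way, interchanging $e\leftrightarrow f$, left $\leftrightarrow$ right, and $A\leftrightarrow B$: one uses $[b_i,f]=0$ to peel the off-diagonal block down to $e\phi(f,f)f=e\phi(e,e)f$, one invokes relation (3.18) for $e\phi(b_1,b_2)e\in\mathcal{Z}(A)$, and one evaluates $\phi([b_1,m],b_2)$ with $[b_1,m]=-mb_1\in M$, closing the argument with the faithfulness of $M$ as a right $B$-module. I do not anticipate a genuine obstacle; the only thing demanding care is the bookkeeping of which Peirce component survives each product and the correct transport of the central elements $f\phi(a_1,a_2)f$ and $\alpha_0$ across $M$ through the isomorphism $\eta$ — and, because $\mathcal{T}$ is not symmetric, the $A$-half and the $B$-half really must be done separately rather than deduced from one another.
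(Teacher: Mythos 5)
Your proposal is correct and runs on the same machinery as the paper (Peirce-block extraction, the values and centrality facts from Lemmas \ref{xxsec3.3}--\ref{xxsec3.6}, transport of central elements across $M$ via $\eta$ under hypothesis (i), and faithfulness of $M$), but it differs at the key step for the diagonal blocks. The paper gets $e\phi(a_1,a_2)e$ by specializing the four-term identity of Lemma \ref{xxsec3.1} to a quadruple of the form $(a_1,a_2,e,m)$ and then feeding in (3.10) and Lemma \ref{xxsec3.4}; you instead expand $\phi([a_1,m],a_2)$ with $[a_1,m]=a_1m\in M$, using only the Lie-derivation property in the first slot together with $\phi(m,a)=-\alpha_0 am$, the membership $f\phi(a_1,a_2)f\in\mathcal{Z}(B)$ from (3.22), hypothesis (i), and faithfulness. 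That is a genuine (if modest) simplification: Lemma \ref{xxsec3.1} is not needed for this lemma at all, and the computation is shorter and easier to check. For the off-diagonal block your two-step peeling with $[a_1,e]=0$ and $[a_2,e]=0$, giving $e\phi(a_1,a_2)f=a_1a_2\,e\phi(e,e)f=a_2a_1\,e\phi(e,e)f$, is exactly the manipulation the paper packages as (3.17), (3.19), (3.23) plus Lemma \ref{xxsec3.3}(3), so there the two arguments coincide in substance; the same goes for the mirrored part (2), where your reduction to $e\phi(f,f)f=e\phi(e,e)f$ and right-module faithfulness is sound.

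One caveat: finish the diagonal-block computation instead of leaving the sign as ``$\pm$''. With the conventions of Lemma \ref{xxsec3.4} your identity is $\bigl(e\phi(a_1,a_2)e-\eta^{-1}(f\phi(a_1,a_2)f)-\alpha_0[a_1,a_2]\bigr)m=0$, hence $e\phi(a_1,a_2)e=\eta^{-1}(f\phi(a_1,a_2)f)+\alpha_0[a_1,a_2]$, which agrees with the last display of the paper's own proof but not with the minus sign in the statement of the lemma (the paper is internally inconsistent on this point, and also writes $\mathcal{Z}(A)$ where $\mathcal{Z}(B)$ is meant for $f\phi(a_1,a_2)f$); the mirrored computation in part (2) does yield $f\phi(b_1,b_2)f=\eta(e\phi(b_1,b_2)e)-\eta(\alpha_0)[b_1,b_2]$ as stated. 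So state the sign you actually obtain and flag the discrepancy, rather than asserting that faithfulness ``yields the stated formula''.
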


\begin{proof}
The conclusion $(1)$ and conclusion $(2)$ can be obtained by the similar ways.
For the sake of conciseness, we now only prove conclusion $(1)$.

\textbf{(1).} Combining $(3.19)$ and $(3.23)$ together with the relation $e\phi(1,x)f=0$ in Lemma \ref{xxsec3.3}, we obtain
$$
e\phi(a_1,a_2)f=-a_2a_1\phi(e,f)f=a_2a_1\phi(e,e)f.    \eqno{(3.26)}
$$
for all $a_1,a_2\in A, x\in T$. In similar methods,
we have
$$
e\phi(a_1,a_2)f=a_1a_2\phi(e,e)f                       \eqno{(3.27)}
$$ for all $a_1,a_2\in A$.
In view of above relations $((3.26))$ and $(3.27)$, we arrive at
$$
e\phi(a_1,a_2)f=a_2a_1\phi(e,e)f=a_1a_2\phi(e,e)f               \eqno{(3.28)}
$$
for all $a_1,a_2\in A$.

According to Lemma \ref{xxsec3.1}, we have
$$
[\phi(a_1,a_2),[e,m]]+[\phi(a,e),[m,a_2]]=[\phi(m,a_2),[e,a]]+[\phi(m,e),[a,a_2]],
$$
for all $a_1, a_2, m\in M$. In view of the relation , we receive at
$$
[\phi(a_1,a_2),m]-[\phi(a,e),a_2m]=-[a,a_2]\phi(m,e)   \eqno{(3.29)}
$$
for all $a_1,a_2\in A, m\in M$
Combining $(3.29)$ and $(3.10)$ together with Lemma \ref{xxsec3.4}, we can achieve
$$
(e\phi(a_1,a_2)e-\eta^{-1}(f\phi(a_1,a_2)f)-\alpha_0[a_1,a_2])m=0
$$
for all $a_1, a_2, m\in M$. The faithfulness
of left $A$-module $M$ now implies
$$
e\phi(a_1,a_2)e=\eta^{-1}(f\phi(a_1,a_2)f)+\alpha_0[a,a_2]
$$
for all $a_1,a_2\in A$.

Based on the above process, one can check that
$$
\phi(a_1,a_2)=\eta^{-1}(f\phi(a_1,a_2)f)+f\phi(a_1,a_2)f+a_1a_2\phi(e,e)f+\alpha_0[a,a_2]
$$
for all $a_1,a_2\in A$.

\textbf{(2).} In analogous manner of $(1)$, we have
$$
\phi(b_1,b_2)=e\phi(b_1,b_2)e+e\phi(e,e)b_1b_2+f\phi(b_1,b_2)f,
$$
where
$$
f\phi(b_1,b_2)f=\eta^{-1}(e\phi(b_1,b_2)e)+\eta(\alpha_0)[b_1,b_2]
$$
for all $ b_1,b_2\in B$.
\end{proof}

Let us now give the proof of our main theorem.
\vspace{2mm}

{\noindent}{\bf Proof of Theorem \ref{xxsec3.2}.}
\vspace{2mm}

At the end, in order to get the final main theorem, we need to obtain the following equation:
$$
aa^\prime\phi(e,e)f-a\phi(e,e)b^\prime-a^\prime\phi(e,e)b+e\phi(e,e)b^\prime=[x,[y,\phi(e,e)]]
$$
for all $x=a+m+b$ and $y=a^\prime+m^\prime+b^\prime$ and all $a,a^\prime\in A, b,b^\prime\in B, m,m^\prime\in M$.

In fact, let
$x=\left[
\smallmatrix
a & m\\
 & b\\
\endsmallmatrix
\right]\in \mathcal{T}$ and $y=\left[
\smallmatrix
a^\prime & m^\prime\\
 & b^\prime\\
\endsmallmatrix
\right]\in \mathcal{T}$, Taking into account the relation $e\phi(e,e)e\oplus f\phi(e,e)f\in \mathcal{Z}(\mathcal{T})$, we arrive at
$$
\begin{aligned}
&[x,[y,\phi(e,e)]]\\
&=[\left[
\smallmatrix
a & m\\
 & b\\
\endsmallmatrix
\right],[\left[
\smallmatrix
a^\prime & m^\prime\\
 & b^\prime\\
\endsmallmatrix
\right],\left[
\smallmatrix
e\phi(e,e)e & e\phi(e,e)f\\
 & f\phi(e,e)f\\
\endsmallmatrix
\right]]]    \\
&=[\left[
\smallmatrix
a & m\\
 & b\\
\endsmallmatrix
\right],\left[
\smallmatrix
0 & a^\prime\phi(e,e)f-e\phi(e,e)b^\prime\\
 & 0\\
\endsmallmatrix
\right]]\\
&=\left[
\smallmatrix
0 & a(a^\prime\phi(e,e)f-e\phi(e,e)b^\prime)-(a^\prime\phi(e,e)f-e\phi(e,e)b^\prime)b\\
 & 0\\
\endsmallmatrix
\right]\\
&=\left[
\smallmatrix
0 & aa^\prime\phi(e,e)f-a\phi(e,e)b^\prime-a^\prime\phi(e,e)b+e\phi(e,e)b^\prime\\
 & 0\\
\endsmallmatrix
\right],
\end{aligned}
$$
for all $a, a^\prime\in A, b, b^\prime\in B,m, m^\prime\in M$

Let $x=a+m+b$ and $y=a^\prime+m^\prime+b^\prime$, according to the bilinearity of the mapping $\phi$,
we get the following decomposition form
$$
\begin{aligned}
\phi(x,y)=&\phi(a,a^\prime)+\phi(a,m^\prime)+\phi(a,b^\prime)\\
&+\phi(m,a^\prime)+\phi(m,m^\prime)+\phi(m,b^\prime)\\
&+\phi(b,a^\prime)+\phi(b,m^\prime)+\phi(b,b^\prime)\\
&=\alpha_0[a,a^\prime]-\alpha_0am^\prime+\alpha_0a^\prime m+\alpha_0 mb^\prime-\alpha_0 m^\prime b+\eta(\alpha_0)[b,b^\prime]\\
&+\eta^{-1}(f\phi(a_1,a_2)f)+f\phi(a_1,a_2)f+e\phi(b_1,b_2)e+\eta(e\phi(b_1,b_2)e)\\
&+e\phi(a,b)e+f\phi(a,b)f+e\phi(b,a)e+f\phi(b,a)f\\
&+aa^\prime\phi(e,e)f-a\phi(e,e)b^\prime-a^\prime\phi(e,e)b+e\phi(e,e)b^\prime\\
&=\lambda_0[x,y]+[x,[y,\phi(e,e)]]+\mu(x,y)
\end{aligned}
$$
where $\mu:\mathcal{T}\times \mathcal{T }\rightarrow \mathcal{Z}(\mathcal{T})$ is a central mapping such that
$$
\begin{aligned}
\mu(x,y)&=\eta^{-1}(f\phi(a_1,a_2)f)+f\phi(a_1,a_2)f+e\phi(b_1,b_2)e+\eta(e\phi(b_1,b_2)e)\\
&+e\phi(a,b)e+f\phi(a,b)f+e\phi(b,a)e+f\phi(b,a)f\\
&=[\left[
\smallmatrix
\eta^{-1}(f\phi(a_1,a_2)f)+e\phi(b_1,b_2)e+e\phi(a,b)e+e\phi(b,a)e & 0\\
 & f\phi(a_1,a_2)f+\eta(e\phi(b_1,b_2)e)+f\phi(a,b)f+f\phi(b,a)f\\
\endsmallmatrix
\right]\\
&\in \mathcal{Z}(\mathcal{T})
\end{aligned}
$$ for all $a,a^\prime\in A, b,b^\prime\in B, m,m^\prime\in M$ .

As a direct corollary of Theorem \ref{xxsec3.2}, we get Lie biderivations of the $(block)$
upper triangular matrix algebra and the nest algebra following from the main theorem.

\begin{corollary}\label{xxsec3.4}
Let $C$ be a commutative domain with identity. If $n\geq 3$, then
each Lie biderivation of $(block)$ upper triangular matrix algebra
$B^{\bar{k}}_{n}(C)$ is the sum of an extremal biderivation and an inner biderivation.
In particular, every biderivation of upper triangular matrix algebra $T_n(C)$ is
the sum of an extremal biderivation and an inner biderivation and central mapping.
 \end{corollary}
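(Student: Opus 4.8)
The plan is to obtain the statement as a specialisation of Theorem \ref{xxsec3.2}: one checks that a (block) upper triangular matrix algebra $\mathcal{T}=B^{\bar{k}}_n(C)$ with $\mathcal{T}\neq M_n(C)$ over a commutative domain $C$ satisfies the hypotheses (i)--(iv), applies the theorem, and then analyses the three summands. To set this up, write $\mathcal{T}$ in $2\times2$ block form $\left[\smallmatrix A & M\\ O & B\endsmallmatrix\right]$ by grouping the diagonal blocks, so that $A$ and $B$ are again (block) upper triangular matrix algebras over $C$ and $M=M_{j\times(n-j)}(C)$ is a full rectangular matrix bimodule.

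\emph{Verification of the hypotheses.} Condition (iv) is immediate, since $\mathcal{Z}(A)\cong C$ is a domain: if $\alpha a=0$ with $\alpha$ central and $a\neq0$, then $\alpha$ annihilates a nonzero entry of $a$, so $\alpha=0$. Condition (i): faithfulness of the full matrix bimodule $M$ together with commutation against the diagonal matrix units shows $\mathcal{Z}(\mathcal{T})=CI_n$, $\mathcal{Z}(A)=CI_j$, $\mathcal{Z}(B)=CI_{n-j}$, whence $\pi_A(\mathcal{Z}(\mathcal{T}))=CI_j=\mathcal{Z}(A)$ and likewise for $B$. Condition (ii) is exactly where $n\geq3$ enters: a (block) upper triangular matrix algebra is commutative only when it is the $1\times1$ algebra $C$, so for $n\geq3$ the split can be chosen so that at least one of $A$, $B$ has size $\geq2$ and is therefore noncommutative (this fails precisely for $n=2$). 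Condition (iii) requires a short matrix-unit computation: for an $(A,B)$-bimodule endomorphism $\mathfrak{f}$ of $M_{j\times(n-j)}(C)$, left and right multiplication of a matrix unit $m_{rs}$ by the idempotents $e^A_{rr}$, $e^B_{ss}$ forces $\mathfrak{f}(m_{rs})=c_{rs}m_{rs}$, and multiplication by the remaining matrix units of $A$ and $B$ together with connectedness of the underlying order identifies all the scalars; thus $\mathfrak{f}(m)=cm$, which is the standard form with $a_0=cI_j$, $b_0=0$.

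Theorem \ref{xxsec3.2} now yields $\phi(x,y)=\lambda_0[x,y]+[x,[y,\phi(e,e)]]+\mu(x,y)$ with $\lambda_0\in\mathcal{Z}(\mathcal{T})$ and $\mu$ central-valued. The first summand is an inner biderivation. Since the diagonal part $e\phi(e,e)e\oplus f\phi(e,e)f$ of $\phi(e,e)$ is central (as used in the proof of Theorem \ref{xxsec3.2}), one has $[y,\phi(e,e)]=[y,e\phi(e,e)f]$ with $e\phi(e,e)f\in M$, so the second summand is the extremal biderivation with parameter $e\phi(e,e)f$ (and vanishes exactly when that parameter is $0$, the only way it can lie in $\mathcal{Z}(\mathcal{T})$). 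This already exhibits $\phi$ as inner $+$ extremal $+$ central.

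The remaining point — and the one I expect to be delicate — is the central-valued bilinear map $\mu$. From the proof of Theorem \ref{xxsec3.2} it is a combination of the central components $f\phi(a,a')f$, $e\phi(b,b')e$, $e\phi(a,b')e$, $f\phi(a,b')f$, $e\phi(b,a')e$, $f\phi(b,a')f$ and their $\eta$-images, so I would control them using Lemmas \ref{xxsec3.5} and \ref{xxsec3.7}. When a diagonal block is a full matrix algebra $M_{d_i}(C)$ with $d_i\geq2$, the decomposition $M_{d_i}(C)=CI_{d_i}\oplus[M_{d_i}(C),M_{d_i}(C)]$ (perfectness of $\mathfrak{sl}_{d_i}$) annihilates the central-valued data on commutators, and bilinearity together with the relation $e\phi(a_1,a_2)e=\eta^{-1}(f\phi(a_1,a_2)f)\pm\alpha_0[a_1,a_2]$ propagates this, giving $\mu\equiv0$ and hence $\phi=$ inner $+$ extremal for a genuine block algebra. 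For $T_n(C)$, every diagonal block is $1\times1$, no such cancellation is available, and a genuine central-valued bilinear term persists — this is the ``$+$ central mapping'' recorded in the statement. Verifying that in the block case no central contribution survives along the ``one scalar per diagonal block'' directions is where the real work lies.
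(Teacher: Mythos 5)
Your reduction --- checking hypotheses (i)--(iv) for $B^{\bar k}_n(C)$ with a two-block splitting and then quoting Theorem \ref{xxsec3.2} --- is precisely all the paper does for this corollary (it gives no separate argument), and that part of your proposal is sound, including the identification of the second summand as the extremal biderivation with parameter $e\phi(e,e)f$ and the observation that $n\geq 3$ is what guarantees condition (ii).

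The genuine gap is the final step, where you hope to show $\mu\equiv 0$ whenever some diagonal block has size at least two, so that a genuine block algebra needs no central term. That step cannot be completed, because the claim is false. Let $\chi=\mathrm{tr}\colon B^{\bar k}_n(C)\to C$ be the ordinary matrix trace and put $\mu_0(x,y)=\chi(x)\chi(y)\,1$. Traces of commutators vanish, so $\mu_0([x,z],y)=0=[\mu_0(x,y),z]+[x,\mu_0(z,y)]$, and likewise in the second argument; hence $\mu_0$ is a nonzero central-valued Lie biderivation on every block upper triangular algebra. It is not of the form $\lambda[x,y]+[x,[y,r]]$: at $x=y=e_{11}$ the right-hand side is $[e_{11},[e_{11},r]]=e_{11}r+re_{11}-2e_{11}re_{11}$, whose $(1,1)$ entry is $0$, whereas $\mu_0(e_{11},e_{11})=1$. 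So central contributions do survive along the scalar (per-block identity, or trace) directions; perfectness of $\mathfrak{sl}_{d_i}$ only controls the commutator directions, and bilinearity cannot propagate the vanishing to the complementary directions. The most Theorem \ref{xxsec3.2} yields --- for block algebras exactly as for $T_n(C)$ --- is inner $+$ extremal $+$ central; the discrepancy you noticed lies in the corollary's own wording (its first sentence omits the central mapping), not in something your $\mathfrak{sl}_{d_i}$ argument can repair.
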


\begin{corollary}\label{xxsec3.5}
Let $\mathcal{N}$ be a nest of a Hilbert space $H$, where dim$H\geq 3$.
Then each Lie biderivation $\varphi$ of nest algebra $\mathcal{T}(\mathcal{N})$ is the sum of an
extremal biderivation and an inner biderivation and and central mapping.
\end{corollary}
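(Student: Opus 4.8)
The plan is to realise the nest algebra $\mathcal{T}(\mathcal{N})$ as a triangular algebra satisfying the hypotheses (i)--(iv) of Theorem \ref{xxsec3.2} and then invoke that theorem directly; the degenerate case of a trivial nest is handled separately. If $\mathcal{N}=\{0,H\}$, then $\mathcal{T}(\mathcal{N})=\mathcal{B}(H)$ is a noncommutative prime algebra with extended centroid $\mathbb{C}$, so by \cite{BresarMartindaleMiers1993} every biderivation has the form $\varphi(x,y)=\lambda[x,y]$ with $\lambda\in\mathbb{C}$, and an easy direct check (using that $[\mathcal{B}(H),\mathcal{B}(H)]$ spans $\mathcal{B}(H)$) shows the same for Lie biderivations, the central summand being forced to vanish; the asserted decomposition then holds with the extremal part equal to zero. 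So from now on assume $\mathcal{N}$ is nontrivial.

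Fix $N\in\mathcal{N}\setminus\{0,H\}$ with orthogonal projection $E$. As recalled in Subsection \ref{xxsec2.3},
$$
\mathcal{T}(\mathcal{N})=\left[\smallmatrix \mathcal{T}(\mathcal{N}_1) & E\mathcal{T}(\mathcal{N})(I-E)\\ O & \mathcal{T}(\mathcal{N}_2)\endsmallmatrix\right]=\left[\smallmatrix A & M\\ O & B\endsmallmatrix\right]
$$
is a triangular algebra over $\mathbb{C}$, and $M=E\mathcal{T}(\mathcal{N})(I-E)$ is faithful as a left $A$-module and as a right $B$-module because it contains every rank-one operator $\xi\otimes\eta$ with $\xi\in N$, $\eta\in N^{\perp}$.

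I would then verify (i)--(iv). For (i), the center of any nest algebra is $\mathbb{C}$ times the identity (see \cite{Davidson1988}), so $\mathcal{Z}(A)=\mathbb{C}E$, $\mathcal{Z}(B)=\mathbb{C}(I-E)$ and $\mathcal{Z}(\mathcal{T}(\mathcal{N}))=\mathbb{C}I$, whence $\pi_A(\mathcal{Z}(\mathcal{T}(\mathcal{N})))=\mathbb{C}E=\mathcal{Z}(A)$ and similarly for $\pi_B$. For (ii), $\dim H\geq 3$ forces one of $N$, $N^{\perp}$ to be at least two-dimensional, and a nest algebra acting on a space of dimension $\geq 2$ is noncommutative. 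Condition (iv) is immediate since $\lambda E\cdot a=0$ with $0\neq a\in A$ forces $\lambda=0$. For (iii) I would show that any $(A,B)$-bimodule endomorphism $\mathfrak{f}$ of $M$ is scalar multiplication: applying $\mathfrak{f}$ to rank-one operators and using $\mathfrak{f}((\xi'\otimes\xi)m)=(\xi'\otimes\xi)\mathfrak{f}(m)$ and $\mathfrak{f}(m(\eta\otimes\eta'))=\mathfrak{f}(m)(\eta\otimes\eta')$ pins down a single scalar $\lambda$ with $\mathfrak{f}(\xi\otimes\eta)=\lambda\,\xi\otimes\eta$ for all admissible $\xi,\eta$, and linearity together with boundedness extends this to $\mathfrak{f}(m)=\lambda m$ on all of $M$, which is the standard form $(3.3)$ with $a_0=\lambda E$, $b_0=0$.

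With (i)--(iv) in hand, Theorem \ref{xxsec3.2} yields $\varphi(x,y)=\lambda_0[x,y]+[x,[y,\varphi(E,E)]]+\mu(x,y)$ with $\lambda_0$ central and $\mu$ central-valued, i.e. an inner biderivation plus an extremal biderivation plus a central mapping, as claimed. I expect the only substantive point to be the verification of (iii) — that every bimodule homomorphism of the off-diagonal corner $E\mathcal{T}(\mathcal{N})(I-E)$ is multiplication by a scalar — but this is classical for nest algebras and rests on the abundance of rank-one operators in that corner; the remaining verifications are routine bookkeeping.
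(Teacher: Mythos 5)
Your overall strategy coincides with the paper's: the paper obtains this corollary by invoking Theorem \ref{xxsec3.2} directly, so the whole content is the verification that a (nontrivial) nest algebra satisfies hypotheses (i)--(iv), and your checks of (i), (ii), (iv) are fine. The gap is precisely in the step you flag as the substantive one, condition (iii). Your sketch multiplies by rank-one operators $\xi'\otimes\xi^{*}\in A$ and $\eta\otimes\eta'^{*}\in B$ with essentially arbitrary vectors, but $A=\mathcal{T}(\mathcal{N}_1)$ and $B=\mathcal{T}(\mathcal{N}_2)$ are themselves nest algebras, and a rank one $x\otimes y^{*}$ lies in a nest algebra only when $x\in N'$ and $y\perp N'_{-}$ for some element $N'$ of the nest; for a continuous nest (e.g.\ the Volterra nest, $N=L^{2}[0,\tfrac12]$, $\xi$ the constant function) there is no nonzero rank one in $A$ with range $\mathbb{C}\xi$ at all. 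The abundance of rank ones is in the corner $M=E\mathcal{B}(H)(I-E)\cong\mathcal{B}(N^{\perp},N)$, not in $A$ or $B$, so the identities you write down are not available for all the vectors you need. Moreover you invoke boundedness of $\mathfrak{f}$, which is not part of hypothesis (iii) and cannot be assumed: the bimodule homomorphisms $\mathfrak{h},\mathfrak{g},\mathfrak{k}$ produced inside the proof of Theorem \ref{xxsec3.2} come from an arbitrary $\mathcal{R}$-bilinear Lie biderivation and are not known to be continuous. Both defects are repairable: take the rank ones that genuinely lie in $A$ and $B$ (Ringrose's criterion), observe that for $a=\xi\otimes\zeta^{*}\in A$, $b=\eta\otimes\rho^{*}\in B$ one has $amb=\langle m\eta,\zeta\rangle\,\xi\otimes\rho^{*}$, deduce first $\mathfrak{f}(\xi\otimes\rho^{*})=\lambda\,\xi\otimes\rho^{*}$ with $\lambda$ independent of the admissible pair, and then $\langle\mathfrak{f}(m)\eta,\zeta\rangle=\lambda\langle m\eta,\zeta\rangle$ for $\eta,\zeta$ running over dense subspaces of $N^{\perp}$ and $N$; since $\mathfrak{f}(m)$ and $m$ are bounded operators this yields $\mathfrak{f}(m)=\lambda m$ with no continuity assumption on $\mathfrak{f}$.

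Your aside on the trivial nest is outside the paper's framework (the paper only asserts that nontrivial nest algebras are triangular, and Theorem \ref{xxsec3.2} cannot apply to $\mathcal{B}(H)$), and as written it is incorrect on two points: when $\dim H<\infty$ the commutators span only the trace-zero operators, not $\mathcal{B}(H)$, and the central summand of a Lie biderivation of $\mathcal{B}(H)$ is not forced to vanish --- $\varphi(x,y)=\mathrm{tr}(x)\mathrm{tr}(y)I$ is a nonzero central-valued Lie biderivation of $M_n(\mathbb{C})$ not of the form $\lambda[x,y]$. Also \cite{BresarMartindaleMiers1993} is a statement about biderivations, so one must first split off the central parts in each variable before it can be quoted for a Lie biderivation. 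Since the asserted decomposition permits a central mapping, that case can likely still be salvaged, but either argue it carefully or restrict to nontrivial nests, as the paper implicitly does.
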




\addtocontents{toc}{
    \protect\settowidth{\protect\@tocsectionnumwidth}{}%
    \protect\addtolength{\protect\@tocsectionnumwidth}{0em}}

\end{document}